\newtheorem{theorem}{Theorem}[section]
\newtheorem{lemma}[theorem]{Lemma}
\newtheorem{corollary}[theorem]{Corollary}
\newtheorem{proposition}[theorem]{Proposition}
\theoremstyle{definition}
\newtheorem{example}[theorem]{Example}
\newtheorem{ass}[theorem]{Assumption}
\theoremstyle{remark}
\newtheorem{remark}[theorem]{Remark}
\numberwithin{equation}{section}
\newcommand\Z{\ensuremath{\mathbb Z}}\newcommand\A{\ensuremath{\mathbb A}}
\newcommand\Q{\ensuremath{\mathbb Q}}\newcommand\R{\ensuremath{\mathbb R}}
\newcommand\C{\ensuremath{\mathbb C}}
\newcommand\Aut{\operatorname{Aut}}
\newcommand\Frob{\operatorname{Frob}}
\newcommand\GL{\operatorname{GL}}
\newcommand\Ind{\operatorname{Ind}}
\newcommand\Sym{\operatorname{Sym}}
\newcommand\Tr{\operatorname{Tr}}
\newcommand{\fp}{{\mathfrak{p}}}
\newcommand{\fN}{{\mathfrak{N}}}
\newcommand{\USp}{\mathrm{USp}}
\newcommand{\U}{\mathrm{U}}
\def\p{\mathfrak p}
\newcommand{\comp}{\begin{picture}(6,5)(-3,-2)\put(0,1){\circle{2}} \end{picture}}\def\circ{\comp}
\def\fN{\mathfrak N}
\newcommand{\ra}{\rightarrow}
\def\XXint#1#2#3{{\setbox0=\hbox{$#1{#2#3}{\int}$}
\vcenter{\hbox{$#2#3$}}\kern-.5\wd0}}
\begin{document}

\title{On the rank and the convergence rate towards the Sato--Tate measure}
\keywords{}

\author{Francesc Fit\'e}
\address{Departament de Matem\`atiques, Universitat Polit\`ecnica de Catalunya/BGSmath \\Edifici Omega, C/Jordi Girona 1--3\\
08034 Barcelona, Catalonia
}
\email{francesc.fite@gmail.com}
\urladdr{https://mat-web.upc.edu/people/francesc.fite/}

\author{Xavier Guitart}
\address{Departament de Matemàtiques i Informàtica\\
Universitat de Barcelona\\Gran via de les Corts Catalanes, 585\\
08007 Barcelona, Catalonia
}
\email{xevi.guitart@gmail.com}

\thanks{Fit\'e was funded by the German Research Council via SFB/TR 45 and by the Excellence Program Mar\'ia de Maeztu MDM-2014-0445. Guitart was partially supported by MTM2015-66716-P.
Both authors were partially supported by MTM2015-63829-P. This project has received funding from the European Research Council (ERC) under the European Union's Horizon 2020 research and innovation programme (grant agreement No 682152).}

\subjclass[2010]{11G05, 11G10, 14G10, 11M50}

\date{\today}

\dedicatory{}

\begin{abstract}
Let $A$ be an abelian variety defined over a number field and let $G$ denote its Sato--Tate group. Under the assumption of certain standard conjectures on $L$-functions attached to the irreducible representations of $G$, we study the convergence rate of any virtual character of~$G$. We find that this convergence rate is dictated by several arithmetic invariants of $A$, such as its rank or its Sato--Tate group $G$. The results are consonant with some previous experimental observations, and we also provide additional numerical evidence consistent with them.
The techniques that we use were introduced by Sarnak in a letter to Mazur, in order to explain the bias in the sign of the Frobenius traces of an elliptic curve without complex multiplication defined over~$\Q$. We show that the same methods can be adapted to study the convergence rate of the characters of its Sato--Tate group, and that they can also be employed in the more general case of abelian varieties over number fields. A key tool in our analysis is the existence of limiting distributions for automorphic $L$-functions, which is due to Akbary, Ng, and Shahabi.
\end{abstract}

\maketitle
\tableofcontents

\section{Introduction}\label{section: intro}

Let $k$ be a number field and $A$ be an abelian variety defined over $k$ of dimension $g\geq 1$. Following Serre \cite{Ser94}, Banaszak and Kedlaya \cite{BK16a} have attached to $A$ a compact real Lie subgroup $G$ of $\USp(2g)$, the so-called Sato--Tate group of $A$, with the conjectural property that it governs the distribution of the Frobenius elements attached to $A$. \footnote{The construction of \cite{BK16a} is in fact more general and it applies to odd weight motives.}

In order to make a more precise statement, let us introduce some notations. Let $\ell$ be a rational prime and let $V_\ell(A)$ denote the (rational) $\ell$-adic Tate module of $A$. The action of the absolute Galois group $G_k$ of $k$ on $V_\ell(A)$ gives rise to an $\ell$-adic representation
\begin{equation}\label{equation: ladic rep}
\varrho_A:G_k\ra \Aut(V_\ell(A))\,.
\end{equation} 
Denote by $P$ the set of nonzero prime ideals of $k$ lot lying over $\ell$ and of good reduction for $A$, that is, the set of nonzero prime ideals of $k$ not dividing the conductor $\fN$ of $A$. For a prime $\fp$ in $P$, set
\begin{equation}\label{equation: Lpoly}
L_\fp(A,T):=\det(1-\varrho_A(\Frob_\fp)T\,|\,V_\ell(A))\,,
\end{equation} 
where $\Frob_\fp$ denotes a Frobenius element at $\fp$. Attached to $\fp$, one can construct a semisimple conjugacy class $y_\fp$ in the set of conjugacy classes $Y$ of $G$ such that 
\begin{equation}\label{equation: yp}
\det(1-y_{\fp}T) = L_\fp(A,T/\sqrt{|\fp|})\,,
\end{equation} 
where $|\fp|$ denotes the absolute norm of $\fp$. We will refer to the projection $\mu$ of the Haar measure $\mu_G$ of $G$ on $Y$ as the \emph{Sato--Tate measure} of $A$. The conjectural property of $G$ that we have alluded to before predicts that the sequence $\{y_{\fp}\}_{\fp \in P}$, where the ideals in~$P$ are ordered according to their absolute norm, is equidistributed on~$Y$ with respect to~$\mu$. 

Let us recall what this means. For $x>0$, let $\pi(x)$ denote the number of primes $\fp$ in $P$ such that $|\fp|\leq x$. Set $$
\mu_x:=\frac{1}{\pi(x)}\sum_{|\fp|\leq x} \delta_{y_{\fp}}\,,
$$
where $\delta_{y_\fp}$ denotes the Dirac measure at $y_\fp$ and the sum runs over primes of~$P$ such that $|\fp|\leq x$. From now on, we make the convention that all sums of terms involving $y_\p$ run over primes $\fp\in P$. By definition, we say that $\{y_{\fp}\}_{\fp \in P}$ is equidistributed on~$Y$ with respect to~$\mu$, or simply $\mu$-equidistributed on~$Y$, if 
\begin{equation}\label{equation: weakly}
\mu_x\ra \mu\qquad\text{ weakly as }\qquad x\ra \infty\,.
\end{equation}
As explained in \cite[Prop. 2, App. Chap. I]{Ser68}, the sequence $\{y_\fp\}_{\fp\in P}$ is $\mu$-equidistributed on $Y$ if and only if for every irreducible character $\chi$ of $G$ one has that
\begin{equation}\label{equation: char of equid}
\lim_{x\ra \infty}\frac{1}{\pi(x)}\sum_{|\fp|\leq x}\chi(y_\fp)=\delta(\chi)\,,
\end{equation}
where $\delta(\chi)$ is $1$ or $0$ depending on whether $\chi$ is trivial or not.\footnote{More generally and for future use, given a virtual character $\varphi$ of $G$, let $\delta (\varphi)$ denote the multiplicity of the trivial representation in $\varphi$.} Recall that, by the Prime Number Theorem, \eqref{equation: char of equid} is equivalent to 
\begin{equation}\label{equation: equidPNT}
\lim_{x\ra \infty}\frac{\log(x)}{x}\sum_{|\fp|\leq x}\chi(y_\fp)=\delta(\chi)\,,
\end{equation}
and that, by the Abel summation trick, \eqref{equation: char of equid} is also equivalent to
\begin{equation}\label{equation: equidtrick}
\sum_{|\fp|\leq x}\chi(y_\fp)\log(|\fp|)= \delta(\chi)x + o(x)\,.
\end{equation}
 
It is of crucial importance that \eqref{equation: char of equid} is connected, via the Wiener--Ikehara Theorem, to the theory of $L$-functions (see \cite[Thm. 1, App. Chap. I]{Ser68}). For $\p\in P$, define the polynomial 
$$
L_\fp(\chi,T):= \det(1 - \varrho(y_{\fp})T)\,,
$$
where~$\varrho$ is an irreducible representation of~$G$ of character~$\chi$. The degree $d_\chi(\fp)$ of $L_\fp(\chi,T)$ is the degree $d_\chi$ of the representation~$\varrho$, and the roots of this polynomial all have absolute value $1$.
One finds that the sequence $\{y_\fp\}_{\fp\in P}$ is $\mu$-equidistributed on $Y$ if and only if, for every nontrivial irreducible character $\chi$ of $G$, the partial Euler product
\begin{equation}\label{equation: eulprod}
  L^P(\chi,s):= \prod_{\fp\in P} L_\fp(\chi,|\fp|^{-s})^{-1}\,,\qquad \text{defined for } \Re(s)>1\,, 
\end{equation}
extends to a holomorphic function on (an open neighborhood of) the halfplane $\Re(s)\geq 1$ and does not vanish at $s=1$. This is unknown in general, but it would follow from the automorphy of $L^P(\chi,s)$, which is predicted by the global Langlands correspondences. Throughout the paper, we will assume the automorphy of $L^P(\chi,s)$, together with a number of conjectural properties that $L^P(\chi,s)$ is expected to satisfy on the halfplane $\Re(s)\geq 1/2$. More precisely, we will consider the following assumption.

\begin{ass}\label{assumption: main} For every irreducible nontrivial representation $\varrho$ of $G$ of character $\chi$:
\begin{enumerate}
\item   The $L$-function $L^P(\chi,s)$ is automorphic. By this we mean that, for each $\fp|\fN$, there exist polynomials $L_\fp(\chi,T)$ of degree $d_\chi(\fp)\leq d_\chi$ such that the Euler product
\begin{equation}\label{equation: compL}
L(\chi,s):=L^P(\chi,s)\prod_{\p|\fN} L_\fp(\chi,|\fp|^{-s})^{-1}\,,\qquad \text{defined for } \Re(s)>1\,, 
\end{equation}
coincides with the automorphic $L$-function $L(\pi,s)$ of some irreducible unitary algebraic cuspidal representation $\pi$ of $\GL_{d_\chi[k:\Q]}(\A_{\Q})$. Thus, the function $L(\chi,s)$ extends to an analytic function on $\C$. 
\item The Riemann Hypothesis holds for $L^P(\chi,s)$ (equiv. for $L(\chi,s)$); that is, all the zeros $\sigma +i\gamma $ of $L^P(\chi,s)$ (equiv. of $L(\chi,s)$) in the critical region $0< \sigma < 1$ are in fact on the critical line $\sigma = 1/2$.

\end{enumerate}
\end{ass}
\begin{remark}
Note that (1) is implied by standard conjectures on automorphic representations. Indeed, the global Langlands correspondence implies that $L(\chi,s)$, as the $L$-function of an irreducible representation of the motivic Galois group of $k$, is the $L$-function of an irreducible unitary cuspidal algebraic representation of $\GL_{d_\chi}(\A_k)$ (see for example \cite[\S4.2]{Cog03a}). By automorphic induction (a consequence of the Principle of Functoriality, see for example \cite[\S4.1]{Cog03a}), $L(\chi,s)$ is then expected to be the $L$-function of an irreducible unitary algebraic  cuspidal representation of $\GL_{d_\chi[k:\Q]}(\A_{\Q})$. 
\end{remark}

This article is concerned with the study of the convergence rate of the measures $\mu_x$ towards the Sato-Tate measure $\mu$. There are several proposals in the literature to estimate this convergence rate. For example, Mazur \cite[\S 1.7,\S1.9]{mazur07} considers $L^\infty$ and $L^2$ norms of the error term of certain counting functions on intervals. In the present article, we adopt the related point of view of using counting functions attached to virtual characters of $G$.

For a (complex) virtual character $\varphi$ of $G$, that is, $\varphi \in \bigoplus_{\chi} \C\cdot \chi$, where $\chi$ runs over the irreducible characters of $G$, set 
\begin{equation}\label{equation: integral}
\delta(\varphi,x):=\frac{1}{\pi(x)} \sum_{|\fp|\leq x}\varphi(y_{\fp})\,. 
\end{equation} 
It follows from Assumption~\ref{assumption: main} (1), that for every nontrivial irreducible character $\chi$ of $G$ one has that
\begin{equation}\label{equation: chars}
\lim_{x\ra \infty} \delta(\chi,x)=0\,.
\end{equation}

It is then apparent that a way to estimate the rate of convergence of the measures $\mu_x$ towards the measure $\mu$ is by studying how fast the function $\delta(\chi,x)$ approaches the function $0$ as $x$ tends to $\infty$. There are examples of this in the literature: By generalizing \cite[Prop. 4.1]{Mur85} and under Assumption~\ref{assumption: main} (2), in (2.4) of \cite{BK16} one finds that
\begin{align}\label{eq:BK}
 \delta(\chi,x)=O\left(d_\chi[k:\Q]x^{-1/2}\log(N(x+d_\chi[k:\Q]))\log(x)\right)\,,
\end{align}
where $N:=|\fN|$. One may interpret the $O$-notation as a sort of \emph{asymptotic $L^\infty$-norm} (a supremum norm in a neighborhood of infinity) of the function $\delta(\chi,x)$. With this notion of convergence rate, Formula~\eqref{eq:BK} makes apparent how the rate of convergence depends on the conductor of~$A$.

The goal of this note is to study the influence on the convergence rate of other invariants of $A$, most notably (although not only) of the rank of $A$. For this purpose, we instead propose to use what could be seen as a sort of \emph{asymptotic $L^2$-norm}. For $X>0$, and $\varphi$ a virtual character of $G$ not containing the trivial character, define
$$
I(\varphi,X):=\frac{1}{\log(X)}\int_2^X |\delta(\varphi,x)|^2 dx\,. 
$$
The main goal of the paper is to study the asymptotic behavior of $I(\varphi,X)$ as $X\ra \infty$. The following is the main result.

\begin{theorem}\label{theorem: main}
Under Assumption~\ref{assumption: main}, for every virtual  character of the form $\varphi=\sum_{\chi\not =1} c_\chi \chi$, where $c_\chi\in \C$ and $\chi$ runs over the irreducible nontrivial characters of $G$, one has that
\begin{equation}\label{equation: main}
I(\varphi):=\lim_{X\ra \infty}I(\varphi,X)=\big|\sum_{\chi}c_\chi(2r_\chi+u_\chi)\big|^2+\sum_\chi\sum_{\gamma_\chi\neq 0}\frac{|c_\chi|^2}{1/4+\gamma_\chi^2}\,,
\end{equation}
where $r_\chi$ denotes the order of the zero of $L(\chi,s)$ at $s=1/2$, $u_\chi$ is the Frobenius--Schur index of $\chi$, and $\gamma_\chi$ runs over the non-zero imaginary parts of the zeros of $L(\chi,s)$ on the critical line.
\end{theorem}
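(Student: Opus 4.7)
The plan is to use the explicit formula for each $L(\chi,s)$ to write $\sqrt{x}\,\delta(\chi,x)$ as a constant plus an oscillatory series indexed by the critical zeros of $L(\chi,s)$, and then to compute the time mean of $|\sqrt{x}\,\delta(\varphi,x)|^2$ via Parseval in the Besicovitch almost periodic framework.

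The first step is the explicit formula. For each non-trivial irreducible $\chi$, set $\psi(\chi,x):=\sum_{|\fp|^k\leq x}\chi(y_\fp^k)\log|\fp|$ and $\theta(\chi,x):=\sum_{|\fp|\leq x}\chi(y_\fp)\log|\fp|$. Applying Perron's formula to $-L'/L(\chi,s)$, Assumption~\ref{assumption: main} gives, under RH,
$$
\psi(\chi,x)=-2r_\chi\sqrt{x}-\sqrt{x}\sum_{\gamma_\chi\neq 0}\frac{x^{i\gamma_\chi}}{1/2+i\gamma_\chi}+O(\log^2 x),
$$
where the first term absorbs the contribution of the zero of order $r_\chi$ at $s=1/2$. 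The difference $\psi(\chi,x)-\theta(\chi,x)$ is dominated by the prime square contribution $\sum_{|\fp|\leq\sqrt{x}}\chi(y_\fp^2)\log|\fp|$; applying~\eqref{equation: equidtrick} to the class function $g\mapsto\chi(g^2)$, whose integral against $\mu_G$ is the Frobenius--Schur index $u_\chi$, yields the asymptotic $u_\chi\sqrt{x}+o(\sqrt{x})$. Combining with Abel summation from $\theta(\chi,x)$ to $\pi(\chi,x):=\sum_{|\fp|\leq x}\chi(y_\fp)$, together with $\pi(x)\sim x/\log x$, one gets
$$
\sqrt{x}\,\delta(\chi,x)=-(2r_\chi+u_\chi)-\sum_{\gamma_\chi\neq 0}\frac{x^{i\gamma_\chi}}{1/2+i\gamma_\chi}+\mathrm{error},
$$
with an error term that will be negligible in quadratic mean.

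Next I perform the Sarnak change of variables $x=e^y$. Setting $F_\varphi(y):=\sqrt{e^y}\,\delta(\varphi,e^y)$ and invoking linearity of $\varphi=\sum_\chi c_\chi\chi$, we have $F_\varphi(y)\simeq -A-B(y)$ with
$$
A:=\sum_\chi c_\chi(2r_\chi+u_\chi),\qquad B(y):=\sum_\chi c_\chi\sum_{\gamma_\chi\neq 0}\frac{e^{iy\gamma_\chi}}{1/2+i\gamma_\chi}.
$$
Since $dx=e^y\,dy$, the definition of $I(\varphi,X)$ rewrites as $(\log X)^{-1}\int_{\log 2}^{\log X}|F_\varphi(y)|^2\,dy$, so the problem reduces to computing the time mean of $|A+B(y)|^2$.

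The final step is Parseval. The function $B(y)$ is Besicovitch $B^2$-almost periodic, its Fourier coefficients $c_\chi/(1/2+i\gamma_\chi)$ being square summable thanks to the standard zero-count $\#\{\gamma_\chi:|\gamma_\chi|\leq T\}=O(T\log T)$. The existence of $B$ in this sense and of a limiting distribution $\nu_\varphi$ for $F_\varphi$ is precisely the content of the Akbary--Ng--Shahabi framework cited in the introduction. Because all frequencies of $B$ are non-zero, the cross terms with the constant $A$ average to zero, so $\mathrm{mean}(|A+B|^2)=|A|^2+\mathrm{mean}(|B|^2)$. Parseval for Bohr almost periodic functions then yields the squared sum of Fourier coefficients over distinct frequencies; under the independence of critical zeros of $L(\chi,s)$ across distinct irreducibles $\chi$ implicit in the formulation of the theorem, this diagonal sum equals $\sum_\chi\sum_{\gamma_\chi\neq 0}|c_\chi|^2/(1/4+\gamma_\chi^2)$, and~\eqref{equation: main} follows. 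The main technical obstacle is precisely the square-summable-but-not-absolutely-summable nature of the coefficients of $B$: one must truncate the explicit formula at height $T$, bound the $L^2$-mean of the resulting tail uniformly in $X$, and then interchange the limits $X\to\infty$ and $T\to\infty$. This is where the Akbary--Ng--Shahabi analysis is essential.
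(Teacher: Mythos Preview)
Your proposal is correct and follows essentially the same route as the paper: derive the explicit formula for each $\chi$ with constant term $-(2r_\chi+u_\chi)$ (the $u_\chi$ arising from the prime-square contribution via~\eqref{equation: equidtrick}), pass from $\theta$ to $\pi$ by Abel summation, and then invoke the Akbary--Ng--Shahabi framework to compute the mean and the $L^2$-mean of the oscillatory part. The only cosmetic difference is that the paper packages your Parseval step as the variance formula of Theorem~\ref{theorem: background 2} and handles complex coefficients by decomposing $\varphi=\varphi_R+i\varphi_I$ into selfdual pieces (pairing $c_\chi\psi(\chi,x)$ with $\bar c_\chi\psi(\bar\chi,x)$ as in Corollary~\ref{corollary: psi and bar psi}), whereas you compute the time mean of $|A+B(y)|^2$ directly.
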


Observe that via this theorem, assuming the Birch and Swinnerton--Dyer conjecture and taking for $\chi$ the character of the tautological representation of $G$ (seen as a subgroup of $\USp(2g)$), the influence of the rank of the Mordell-Weil group of $A$ on the rate of convergence of the measures~$\mu_x$ towards the Sato--Tate measure~$\mu$ becomes apparent.\footnote{An effect of the rank on the convergence towards the Sato--Tate measure had been experimentally observed. On Drew Sutherland's web page:
\begin{center}\url{https://math.mit.edu/~drew/g1_r28_a1f.gif},
\end{center} 
one can visualize a very asymmetric convergence towards the Sato--Tate measure in the case of Elkies' elliptic curve (the highest rank elliptic curve known to date, of rank at least 28).}

The proof of Theorem \ref{theorem: main} occupies \S\ref{section: proof}. It relies on work of Akbary, Ng, and Shahabi \cite{ANS14} on the existence of a limiting distribution attached to any unitary cuspidal automorphic selfdual $L$-function, and it follows the ideas that Sarnak introduced in \cite{Sar07} (see also the discussion of Mazur--Stein \cite[\S 9,\S 10]{mazur-stein} and Fiorilli \cite{Fiorilli}) to explain the bias in the sign of the Frobenius traces of an elliptic curve in terms of the rank. Sarnak restricted his attention to elliptic curves without complex multiplication, in which case $G=\USp(2)$ and the nontrivial irreducible representations of~$G$ are the symmetric powers of its standard representation.

In \S\ref{section: bounrank}, we use Theorem \ref{theorem: main} to give a simple upper bound of the asymptotic $L^2$-norm $I(\varphi)$. This is used in \S\ref{section: comments} to analyze the convergence rate of certain virtual characters which are of interest in the numerical calculation of Sato--Tate groups. Indeed, the usual method for such calculations is to compute approximations to the limit values of a certain set of virtual characters, for which these limit values are known to determine the Sato--Tate group. A typical example are the moments of the coefficients of the polynomial $L_\fp(A,T/\sqrt{|\fp|})$ (cf. \cite{KS09, FKRS12}) or the so-called power sums (cf. \cite{KS09}). Formula \eqref{equation: main} can then be used to determine how fast one can expect different families of virtual characters to converge. For instance, Shieh \cite{Shi16}  experimentally observed a better convergence of the set of irreducible characters than of the moments. This is consistent with the velocities of convergence predicted by \eqref{equation: main}, and in fact one can view Theorem~\ref{theorem: main} as a theoretical justification for Shieh's observation. 

In \S\ref{section: examples} we report on some numerical experiments, carried out to test whether the predictions of convergence rate arising from Theorem \ref{theorem: main} can actually be observed in concrete examples. The concrete choice of examples also reflects the goal of illustrating how the different invariants of abelian varieties (the various analytic ranks, the Frobenius--Schur index, the Sato--Tate group, etc.) affect the convergence rate.

In the spirit of Sarnak's letter \cite{Sar07}, the results of \S\ref{section: proof} can be used to study the bias of the sign of the Frobenius trace of an abelian variety. We briefly report on this in \S\ref{section: xevixef}.

\textbf{Notations.} We use $|\cdot|$ to denote the absolute norm of ideals in rings of integers of number fields and the complex absolute value. Its use in one sense or the other should be clear from the context.

\textbf{Acknowledgments.} Guitart is thankful to the ESAGA group in the University of Duisburg--Essen, for their warm hospitality during his visit on the spring of $2016$ where part of this work was carried out. Thanks to Amir Akbary, Jorge Jim\'enez, Kiran S. Kedlaya, Victor Rotger, and Mark Watkins for suggestive remarks and helpful comments.

\section{Background}\label{section: background}

In this section, we recall the results of \cite{ANS14} that we will require in \S\ref{section: proof} and a few technical lemmas. 

\subsection{Limiting distributions} 

Let $\psi\colon \R_{>0}\ra \R$ be a function admitting an expression of the form
\begin{equation}\label{equation: special form}
\psi(x)=c+S(x,T)+R(x,T)\,,
\end{equation}
for any $T\geq 2$, where $c\in\R$ and: 
\begin{enumerate}[i)]
\item The main term $S(x,T)$ is of the form $\Re\left(\sum_{\gamma_n\leq T}\eta_nx^{i\gamma_n} \right)$, where $\{\gamma_n\}_{n\geq 1}\subseteq \R_{>0}$ is a non-decreasing sequence that tends to infinity, $\{\eta_n\}_{n\geq 1}\subseteq \C$, and there exists $\theta \in [0,3-\sqrt 3]$ such that
\begin{equation}\label{equation: cond 1}
\sum_{\gamma_n\leq T}\gamma_n^2|\eta_n|^2=O(T^\theta)\,.
\end{equation}
\item The error term $R(x,T)$ satisfies
\begin{equation}\label{equation: cond 2}
\lim_{X\ra \infty}\frac{1}{\log(X)}\int_2^X|R(x,X)|^2\frac{dx}{x}=0\,.
\end{equation}
\end{enumerate}
The next statement is contained in \cite[Cor. 1.3]{ANS14}.

\begin{theorem}[\cite{ANS14}]\label{theorem: background 2}
The function $\psi$ possesses a limiting distribution $\mu_\psi$ with respect to the measure $dx/x$. That is, for any continuous function $f: \R \ra \R$ one has
\begin{equation}\label{equation: loglim}
\lim_{X\ra \infty} \frac{1}{\log(X)}\int_2 ^Xf(\psi(x))\frac{dx}{x}=\int_\R f(x) \mu_\psi(x)\,.
\end{equation}
Moreover, the expectation and variance of~$\mu_\psi$ are respectively
$$
\mathrm E[\mu_\psi]:=\int_\R x \mu_\psi(x)= c\,,\qquad \mathrm V[\mu_\psi]:=\int_\R (x-c)^2\mu_\psi(x)=\frac{1}{2}\sum_{n\geq 1}|\eta_n|^2\,.
$$
\end{theorem}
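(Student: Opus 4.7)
The plan is to adapt the Rubinstein--Sarnak approach. Change variables $u=\log x$ so that the average in~\eqref{equation: loglim} becomes $Y^{-1}\int_{\log 2}^{Y}f(\widetilde\psi(u))\,du$ with $\widetilde\psi(u):=\psi(e^u)$ and $Y=\log X$; in these coordinates condition~(ii) becomes $\|\widetilde R_{e^Y}\|_{L^2(du/Y)}\to 0$ as $Y\to\infty$, where $\widetilde R_T(u):=R(e^u,T)$. For a fixed truncation level $T_0\geq 2$, set $\widetilde\psi^{(T_0)}(u):=c+\Re(\sum_{\gamma_n\leq T_0}\eta_n e^{i\gamma_n u})$.

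For each fixed $T_0$ the truncated function is a real trigonometric polynomial in $u$. Kronecker--Weyl equidistribution applied to the closure of the $\R$-orbit $\{u\cdot(\gamma_1,\ldots,\gamma_{N(T_0)})\bmod 2\pi\}$ inside the torus $(\R/2\pi\Z)^{N(T_0)}$ shows that the pushforward of $du/Y$ on $[\log 2,Y]$ by $\widetilde\psi^{(T_0)}$ converges weakly to a probability measure $\mu_{T_0}$ on $\R$. Its mean equals $c$ (since every $\gamma_n$ is positive) and its variance equals $\tfrac12\sum_{\gamma_n\leq T_0}|\eta_n|^2$, the variance computation relying on $|\Re(e^{i\theta})|^2=\tfrac12(1+\cos 2\theta)$ together with the orthogonality of distinct torus characters.

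To pass to $\widetilde\psi$ I would combine two ingredients. Condition~(i) together with partial summation and $\theta<2$ yields $\sum_{n}|\eta_n|^2<\infty$, and Parseval on the torus gives
$$
\limsup_{Y\to\infty}\|\widetilde\psi-\widetilde\psi^{(T_0)}\|_{L^2([\log 2,Y],\,du/Y)}^2\leq \tfrac12\sum_{\gamma_n>T_0}|\eta_n|^2,
$$
where condition~(ii) is used to discard $\widetilde R_{e^Y}$. For $f$ Lipschitz with compact support, Cauchy--Schwarz then shows that $Y^{-1}\int f(\widetilde\psi)\,du$ differs from $Y^{-1}\int f(\widetilde\psi^{(T_0)})\,du$ by an amount that becomes arbitrarily small as $T_0\to\infty$, uniformly in $Y$. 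Consequently $\mu_{T_0}$ is weakly Cauchy, yielding a probability measure $\mu_\psi$ for which~\eqref{equation: loglim} holds for Lipschitz compactly supported $f$; a standard tightness/approximation argument (Stone--Weierstrass plus the uniform variance bound $\sum|\eta_n|^2<\infty$) extends this to arbitrary continuous $f$. The moments pass to the limit: $\mathrm E[\mu_\psi]=\lim_{T_0}\mathrm E[\mu_{T_0}]=c$ and $\mathrm V[\mu_\psi]=\lim_{T_0}\tfrac12\sum_{\gamma_n\leq T_0}|\eta_n|^2=\tfrac12\sum_{n\geq 1}|\eta_n|^2$.

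The main obstacle is the careful coordination of the two limits $Y\to\infty$ and $T_0\to\infty$. One must fix $T_0$ first, so that Kronecker--Weyl applies to a trigonometric polynomial of bounded complexity; let $Y\to\infty$ to obtain $\mu_{T_0}$; and only afterwards let $T_0\to\infty$, relying on a $T_0$-uniform-in-$Y$ bound for the remainder $\widetilde\psi-\widetilde\psi^{(T_0)}$. The precise range $\theta\leq 3-\sqrt{3}$ in~(i) is exactly what is needed to make $\sum|\eta_n|^2$ converge while also controlling the Riesz-mean tail arising from the diagonal and cross terms when estimating the $L^2$-average of $\widetilde\psi^{(T)}-\widetilde\psi^{(T_0)}$ at finite $Y$.
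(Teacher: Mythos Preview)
The paper does not give its own proof of this statement; it is quoted without argument from \cite[Cor.~1.3]{ANS14}. Your sketch follows exactly the Rubinstein--Sarnak method that \cite{ANS14} employs: pass to logarithmic coordinates, truncate $S(x,T)$ to a trigonometric polynomial, apply Kronecker--Weyl to obtain a limiting distribution $\mu_{T_0}$ for the truncation, and then let $T_0\to\infty$ using the $L^2$ control furnished by conditions~(i) and~(ii). You have correctly identified both the central difficulty (coordinating the limits $Y\to\infty$ and $T_0\to\infty$) and the role of the threshold $\theta\le 3-\sqrt3$ in bounding the cross terms at finite $Y$.

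One small remark: the variance computation does not actually require the full strength of Kronecker--Weyl on the subtorus, only the elementary fact that $\frac{1}{Y}\int_0^Y e^{i\alpha u}\,du\to 0$ for $\alpha\neq 0$; in particular the formula $\mathrm V[\mu_\psi]=\tfrac12\sum_n|\eta_n|^2$ uses only that the $\gamma_n$ are pairwise distinct and positive, not $\Q$-linear independence.
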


\begin{remark}\label{remark: opetitasqrtx}
Let $\psi$ be as in \eqref{equation: special form} and let $\psi':\R_{>0}\ra \R$ be such that $\psi(x)=\psi'(x)+\Delta(x)$, where $\Delta(x)=o(1)$. If we write $\psi'(s)$ as
$$
\psi'(x)=c+S(x,T)+R'(x,T)\,,
$$
the main term $S(x,T)$ and the error term $R'(x,T):=R(x,T)-\Delta(x)$ also satisfy \eqref{equation: cond 1} and \eqref{equation: cond 2}. Indeed, this amounts to showing that
$$
\lim_{X\ra\infty}\frac{1}{\log(X)}\int_2^X |\Delta(x)|^2\frac{dx}{x}=0\,.
$$
To prove this, let $\epsilon>0$ and note that, since $\Delta(x)=o(1)$, there exists $X_\epsilon>0$ such that $|\Delta(x)|^2<\epsilon$ for $x>X_\epsilon$. Then
$$
\lim_{X\ra\infty}\frac{1}{\log(X)}\int_{2}^X |\Delta(x)|^2\frac{dx}{x}=\lim_{X\ra\infty}\frac{1}{\log(X)}\int_{X_\epsilon}^X |\Delta(x)|^2\frac{dx}{x}\leq\lim_{X\ra\infty}\frac{\epsilon}{\log(X)}\int_{X_\epsilon}^X \frac{dx}{x}\leq \epsilon\,.
$$
\end{remark}

We finish this section by recording a property of the limit appearing on the left hand side of \eqref{equation: loglim} that we will use in \S\ref{section: proof}.

\begin{lemma}\label{lemma: prolim}
  Let $f\colon \R_{>0}\ra \R_{\geq 0}$ be a locally integrable function such that
  $$
  \lim_{X\ra \infty}\frac{1}{\log (X)}\int_2^X f(x)dx=C\,,
  $$ 
  for some $C\in\R$. Let $g\colon \R_{>0}\ra \R$ be a function such that $\displaystyle\lim_{x\ra\infty } g(x)= 1$ and such that $fg$ is locally integrable. Then
  \begin{align*}
    \lim_{X\ra \infty}\frac{1}{\log (X)}\int_2^X f(x)g(x)dx=C\,.
  \end{align*}
\end{lemma}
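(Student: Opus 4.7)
The plan is a standard $\epsilon$-splitting argument that exploits positivity of $f$ to control the tail where $g$ is close to $1$.

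First, I would record the obvious consequence that since $\frac{1}{\log X}\int_2^X f(x)\,dx \to C$ and $f\geq 0$, the limit satisfies $C\geq 0$ and the integral $\int_2^X f(x)\,dx$ is $O(\log X)$; in particular it is bounded on any fixed compact interval. Given $\epsilon > 0$, by the hypothesis $\lim_{x\to\infty} g(x)=1$, I can choose $X_\epsilon \geq 2$ such that $|g(x)-1|<\epsilon$ for all $x\geq X_\epsilon$. I would then split
\begin{equation*}
\int_2^X f(x)g(x)\,dx = \int_2^{X_\epsilon} f(x)g(x)\,dx + \int_{X_\epsilon}^X f(x)\,dx + \int_{X_\epsilon}^X f(x)(g(x)-1)\,dx,
\end{equation*}
valid for $X\geq X_\epsilon$, where the first integral is a constant (depending on $\epsilon$) since $fg$ is locally integrable, so it contributes $0$ after dividing by $\log(X)$ and letting $X\to\infty$.

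For the second integral, rewrite it as $\int_2^X f(x)\,dx - \int_2^{X_\epsilon} f(x)\,dx$, so that $\frac{1}{\log X}\int_{X_\epsilon}^X f(x)\,dx \to C$ as $X\to\infty$ by the hypothesis. For the third integral, using $f\geq 0$ and $|g(x)-1|<\epsilon$ on $[X_\epsilon,X]$, I would bound
\begin{equation*}
\Bigl|\int_{X_\epsilon}^X f(x)(g(x)-1)\,dx\Bigr| \leq \epsilon \int_{X_\epsilon}^X f(x)\,dx \leq \epsilon \int_2^X f(x)\,dx,
\end{equation*}
so that its contribution, divided by $\log(X)$, has limit superior at most $\epsilon C$.

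Combining the three pieces yields
\begin{equation*}
\limsup_{X\to\infty}\Bigl|\frac{1}{\log X}\int_2^X f(x)g(x)\,dx - C\Bigr| \leq \epsilon C,
\end{equation*}
and since $\epsilon>0$ was arbitrary, the limit equals $C$, as claimed. There is no genuine obstacle here; the only mild point worth stressing is the use of $f\geq 0$ to pass absolute values through the integral when estimating the tail contribution of $g(x)-1$. Without this sign hypothesis the argument would still go through under an additional assumption of the form $\int_2^X |f(x)|\,dx = O(\log X)$, but as stated, positivity makes this automatic from the existence of the limit defining $C$.
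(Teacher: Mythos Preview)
Your proof is correct and follows essentially the same $\epsilon$-splitting strategy as the paper: choose $X_\epsilon$ so that $g$ is within $\epsilon$ of $1$ beyond it, observe the initial segment is negligible after dividing by $\log X$, and use $f\geq 0$ to control the tail. The only cosmetic difference is that you decompose the tail as $fg = f + f(g-1)$ and bound the error term directly, whereas the paper sandwiches $fg$ between $(1-\epsilon)f$ and $(1+\epsilon)f$ and argues the upper and lower bounds separately; your version is marginally tidier but not substantively different.
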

\begin{proof}
  First of all, we observe that for any $M\geq 2$ we have that
  \begin{align}\label{eq: 10}
    \lim_{X\ra \infty}\frac{1}{\log (X)}\int_M^X f(x)dx=C.
  \end{align}
Now fix $\epsilon>0$ and let $X_\epsilon$ be such that $g(x)$ belongs to the interval $[1-\epsilon,1+\epsilon]$ for all $x\geq X_\epsilon$. Then, for any $X\geq X_\epsilon$ we have
\begin{align*}
  \frac{1}{\log (X)}\int_2^X f(x)g(x)dx =&   \frac{1}{\log (X)}\int_2^{X_\epsilon} f(x)g(x)dx +   \frac{1}{\log (X)}\int_{X_\epsilon}^X f(x)g(x)dx \\
\leq & \frac{1}{\log (X)}\int_2^{X_\epsilon} f(x)g(x)dx +   \frac{(1+\epsilon)}{\log (X)}\int_{X_\epsilon}^X f(x)dx.
\end{align*}
Now, by \eqref{eq: 10} for $X$ sufficiently large we will have that
\begin{align*}
   \frac{1}{\log (X)}\int_{X_\epsilon}^X f(x)dx \leq C + \epsilon\,.
\end{align*}
Also, for $X$ large enough 
\begin{align*}
  \frac{1}{\log (X)}\int_2^{X_\epsilon} f(x)g(x)dx \leq \epsilon\,.
\end{align*}
Therefore, we see that for $X$ large enough we have that
\begin{align*}
  \frac{1}{\log (X)}\int_2^X f(x)g(x)dx \leq \epsilon + (1+\epsilon)(C+\epsilon)=C + \epsilon (2+C+\epsilon).
\end{align*}
Now, mutatis mutandis one can also show that for $X$ large enough one has
\begin{align*}
  \frac{1}{\log X}\int_2^X f(x)g(x)dx \geq C - \epsilon (2+C-\epsilon)
\end{align*}
and we see that the limit when $X\ra \infty$ is also $C$.
\end{proof}


\subsection{Limiting distributions of automorphic $L$-functions}

Let $\pi$ be an irreducible unitary cuspidal automorphic representation of $\GL_r(\A_\Q)$, for some $r\geq 1$, and let $L(\pi,s)$ denote the automorphic $L$-function attached to $\pi$. Suppose that $L(\pi,s)\not=\zeta(s-i\tau_0)$ for any $\tau_0\in \R$, where~$\zeta$ denotes the Riemann Zeta function. 
For $n\geq 1$, define the coefficients $\Lambda_\pi(n)$, by prescribing an equality of Dirichlet series
\begin{equation}
  -\frac{L(\pi,s)'}{L(\pi,s)}=:\sum_{n\geq 1}\Lambda_{\pi}(n) n^{-s}\,,
\end{equation}
and, for $x>0$, define the function 
\begin{align}\label{eq: automorphicpsi}
\psi(\pi,x):=\frac{1}{\sqrt x}\sum_{n\leq x}\Lambda_\pi(n)\,.
\end{align}
The following is \cite[Prop. 4.2]{ANS14}.

\begin{theorem}\label{theorem: background 1} Under the Riemann Hypothesis for $L(\pi,s)$, for any $x>0$ and $T\geq 2$, we have 
\begin{align}
  \psi(\pi,x)= -2r_{\pi} - \sum_{0<|\gamma_\pi|\leq T} \frac{x^{i\gamma_\pi}}{1/2+i\gamma_\pi} + R_\pi(x,T)\,.
\end{align}
Here, $r_\pi$ denotes the order of the zero of $L(\pi,s)$ at $s=1/2$, $\gamma_\pi$ runs over the non-zero imaginary parts of absolute value up to $T$ of the zeros of $L(\pi,s)$ on the critical line, and the error term $R_\pi(x,T)$ satisfies
\begin{align*}
  R_\pi(x,T) = O\left( \frac{x^{1/2+\beta}\log^2(x)}{T} + x^{\beta-1/2}\log (x)+\frac{x^{1/2}\log^2 (T)}{T\log (x)}+\frac{x^{1/2}\log (T)}{T}\right),
\end{align*}
for some $\beta \in [0,1/2)$.
\end{theorem}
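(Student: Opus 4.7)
The plan is to derive an explicit formula for $\sum_{n\le x}\Lambda_\pi(n)$ by contour integration, parallel to the classical derivation of the explicit formula for $\psi(x)$ from $\zeta(s)$. First, apply the truncated Perron formula to the Dirichlet series $-L'/L(\pi,s)=\sum_n\Lambda_\pi(n)n^{-s}$: for $c=1+1/\log x$ and $x$ not an integer,
$$\sum_{n\le x}\Lambda_\pi(n) \;=\; \frac{1}{2\pi i}\int_{c-iT}^{c+iT}\Bigl(-\frac{L'}{L}(\pi,s)\Bigr)\frac{x^s}{s}\,ds \;+\; E_{\mathrm{Per}}(x,T),$$
where the Perron truncation error $E_{\mathrm{Per}}(x,T)$ is controlled by the bound $\Lambda_\pi(n)=O(n^\beta\log n)$ coming from the Luo--Rudnick--Sarnak Ramanujan estimate for cusp forms on $\GL_r$, which produces the admissible exponent $\beta\in[0,1/2)$.

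Next I would shift the contour leftward to a vertical line $\Re(s)=-U$ with $U>0$ fixed. The residue theorem then identifies three kinds of contributions inside the rectangle: (a)~residues at the nontrivial zeros of $L(\pi,s)$, which under the assumed Riemann Hypothesis all lie on $\Re(s)=1/2$; (b)~a residue at $s=0$ coming from the pole of $x^s/s$; (c)~residues at the trivial zeros of $L(\pi,s)$ arising from the archimedean gamma factors. The standing hypothesis $L(\pi,s)\neq\zeta(s-i\tau_0)$ ensures $L(\pi,s)$ is entire, so no residue at $s=1$ arises. Isolating the zero at $s=1/2$ of order $r_\pi$ from the remaining nontrivial zeros $\rho=1/2+i\gamma_\pi$ with $0<|\gamma_\pi|\le T$, the sum of nontrivial-zero residues equals
$$-\,r_\pi\cdot\frac{x^{1/2}}{1/2}\;-\;\sqrt{x}\sum_{0<|\gamma_\pi|\leq T}\frac{x^{i\gamma_\pi}}{1/2+i\gamma_\pi},$$
and dividing by $\sqrt{x}$ yields precisely the two main terms $-2r_\pi$ and $-\sum_{0<|\gamma_\pi|\le T}x^{i\gamma_\pi}/(1/2+i\gamma_\pi)$ appearing in the statement.

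Everything else is absorbed into $R_\pi(x,T)$: the Perron error, the integral along $\Re(s)=-U$ (transported via the functional equation to $\Re(s)=1+U$, where a convexity bound for $L(\pi,s)$ introduces the exponent $\beta$), the integrals along the horizontal segments $\Im(s)=\pm T$, and the $O(1)$ residues at $s=0$ and at the trivial zeros. The main obstacle is producing the precise four-term shape of $R_\pi(x,T)$. For the horizontal segments one chooses $T$ so that $L(\pi,s)$ has no zero within distance $\ll 1/\log T$ of the lines $\Im(s)=\pm T$, which is possible by the Hadamard zero-count $N_\pi(T+1)-N_\pi(T)\ll\log T$; this yields $|L'/L(\pi,\sigma\pm iT)|\ll\log^2 T$ uniformly for $\sigma\in[-U,c]$, and so the horizontal contribution is bounded (after dividing by $\sqrt{x}$) by $x^{1/2}\log^2 T/(T\log x)+x^{1/2}\log T/T$. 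The Perron error becomes $x^{1/2+\beta}\log^2 x/T$ after division by $\sqrt{x}$, and the shifted vertical integral together with the trivial-zero residues contributes $x^{\beta-1/2}\log x$, completing the stated bound on $R_\pi(x,T)$.
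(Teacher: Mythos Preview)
The paper does not prove this statement; it merely cites it as \cite[Prop.~4.2]{ANS14}. Your sketch reproduces the standard derivation of the explicit formula for automorphic $L$-functions via truncated Perron plus a contour shift, which is indeed the method used in \cite{ANS14} (itself modeled on the classical treatment, cf.\ \cite[\S5.5]{IK}). The main terms $-2r_\pi$ and $-\sum x^{i\gamma_\pi}/(1/2+i\gamma_\pi)$ are correctly identified as the residues at the nontrivial zeros, and your use of the Luo--Rudnick--Sarnak bound to justify the exponent $\beta\in[0,1/2)$ is exactly the input \cite{ANS14} invokes.

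One point where your bookkeeping is slightly off: the term $x^{\beta-1/2}\log x$ does not come from the far-left vertical integral transported by the functional equation (that contribution is negligible, of size $x^{-U-1/2}$ times polylogarithmic factors), but rather from the ``$n$ close to $x$'' part of the Perron truncation error, where the bound $|\Lambda_\pi(n)|\ll n^\beta\log n$ is what produces the $x^\beta$. Likewise the term $x^{1/2+\beta}\log^2(x)/T$ arises from the ``$n$ far from $x$'' Perron tail combined with the same coefficient bound, not purely from the horizontal integrals. These are minor misattributions; the overall architecture of your argument is correct and matches the cited source.
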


\begin{remark}\label{remark: ontheproof}
By combining Theorems~\ref{theorem: background 1} and \ref{theorem: background 2}, one obtains as in \cite[\S4.1]{ANS14} that if $\pi$ is selfdual\footnote{We alert the reader that the selfduality condition on $\pi$ does not appear in the statement of \cite[Cor. 1.5]{ANS14}, although it is used in the proof (see the last display in page $767$ of loc. cit.).}, then $\psi(\pi,x)$ has a limiting distribution with respect to the measure $dx/x$ (see \cite[Cor. 1.5]{ANS14}). Indeed, if $\pi$ is selfdual the zeros of $L(\pi,s)$ come in conjugate pairs, and by pairing them one sees that $\psi(\pi,x)$ can be written as in \eqref{equation: special form} in such a way that $R_\pi(x,T)$ satisfies \eqref{equation: cond 2} and that, if $\{\gamma_n\}_{n\geq 1}$ is an ordering by size of the positive imaginary parts of the zeros of $L(\pi,s)$ on the critical line, then \eqref{equation: cond 1} is satisfied with $\eta_n:=-2/(1/2+i\gamma_n)$ and $\theta:=\beta$. We will apply a similar argument to $c\psi(\pi,x)+\bar c\psi(\pi^\vee,x)$, with $c\in \C$, if $\pi$ is not selfdual (see Corollary~\ref{corollary: psi and bar psi} below).
\end{remark}

\section{Proof of Theorem \ref{theorem: main}}\label{section: proof}

Resume the notations of \S\ref{section: intro}. The first step towards the proof of Theorem~\ref{theorem: main} is to consider the case of a nontrivial irreducible character $\chi$ of $G$. 
For $x>0$, define the function
$$
\psi(\chi,x):=\frac{\log( x)}{\sqrt x}\sum_{|\fp|\leq x}\chi(y_\fp)\,.
$$

\begin{proposition}\label{proposition: irred chars} Under Assumption \ref{assumption: main}, for a nontrivial irreducible character $\chi$ of $G$ and for any $x>0$ and $T\geq 2$, we have that
\begin{align}\label{equation: psi(chi,x)}
\psi(\chi,x)=-2r_\chi-u_\chi- \sum_{0<|\gamma_\chi|\leq T}\frac{x^{i\gamma_\chi}}{1/2+i\gamma_\chi} +R(x,T)\,,
\end{align}
where $r_\chi$, $u_\chi$ are as in the statement of Theorem~\ref{theorem: main}, $\gamma_\chi$ runs over the non-zero imaginary parts of absolute value up to $T$ of the zeros of $L(\chi,s)$ on the critical line, and the error term $R(x,T)$ satisfies \eqref{equation: cond 2}.
\end{proposition}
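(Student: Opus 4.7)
The plan is to reduce \eqref{equation: psi(chi,x)} to the explicit formula of Theorem~\ref{theorem: background 1} applied to the cuspidal automorphic representation $\pi$ provided by Assumption~\ref{assumption: main}(1), for which $L(\pi,s)=L(\chi,s)$. Expanding $-L(\pi,s)'/L(\pi,s)$ via the Euler product (the bad-prime factors contribute an $O(1)$ term, which is harmless), the Dirichlet coefficient at $n=|\fp|^m$ with $\fp\in P$ is $\chi(y_\fp^m)\log(|\fp|)$, and therefore
\[
\sqrt{x}\,\psi(\pi,x)=\sum_{|\fp|\leq x}\chi(y_\fp)\log(|\fp|)+\sum_{|\fp|\leq\sqrt{x}}\chi(y_\fp^2)\log(|\fp|)+\sum_{m\geq 3}\sum_{|\fp|\leq x^{1/m}}\chi(y_\fp^m)\log(|\fp|)+O(1).
\]
The $m\geq 3$ tail is $O(x^{1/3}\log(x))$ by a Chebyshev-type bound, hence negligible after dividing by $\sqrt{x}$. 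For the $m=2$ contribution, the virtual character $g\mapsto\chi(g^2)$ decomposes as $u_\chi\cdot\mathbf{1}+\varphi$ with $\delta(\varphi)=0$, since the multiplicity of the trivial representation in $g\mapsto\chi(g^2)$ is exactly the Frobenius--Schur indicator $u_\chi$. Applying \eqref{equation: equidtrick} to each irreducible summand of $\varphi$ (valid under Assumption~\ref{assumption: main}(1)) and the prime number theorem for $k$ yields
\[
\psi(\pi,x)=\frac{B(x)}{\sqrt{x}}+u_\chi+o(1),\qquad B(x):=\sum_{|\fp|\leq x}\chi(y_\fp)\log(|\fp|).
\]
Combining this with Theorem~\ref{theorem: background 1} (noting $r_\pi=r_\chi$) gives
\begin{equation}\label{eq:plan-B}
\frac{B(x)}{\sqrt{x}}=-2r_\chi-u_\chi-\sum_{0<|\gamma_\chi|\leq T}\frac{x^{i\gamma_\chi}}{1/2+i\gamma_\chi}+R_\pi(x,T)+o(1).
\end{equation}

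The next step is to transfer \eqref{eq:plan-B} from $B(x)/\sqrt{x}$ to $\psi(\chi,x)=A(x)\log(x)/\sqrt{x}$, where $A(x):=\sum_{|\fp|\leq x}\chi(y_\fp)$. Partial summation yields $\psi(\chi,x)=B(x)/\sqrt{x}+\Delta(x)$ with $\Delta(x):=\frac{1}{\sqrt{x}}\int_2^x A(t)\,dt/t$. A second partial summation, expressing $A(t)=B(t)/\log(t)+\int_2^t B(s)\,ds/(s\log^2(s))$, followed by swapping the order of integration, produces the clean telescoping
\[
\Delta(x)=\frac{\log(x)}{\sqrt{x}}\int_2^x\frac{B(t)}{t\log^2(t)}\,dt.
\]

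The main obstacle is to show that $\Delta(x)$ can be absorbed into the error $R(x,T)$ while preserving \eqref{equation: cond 2}. By Remark~\ref{remark: opetitasqrtx} it suffices to prove $\Delta(x)=o(1)$ pointwise as $x\to\infty$. Substituting $B(t)=\sqrt{t}\,\psi(\pi,t)-u_\chi\sqrt{t}+o(\sqrt{t})$ obtained above, the $u_\chi$ and $o(1)$ pieces contribute $O(\sqrt{x}/\log^2(x))$ to the inner integral, producing only $O(1/\log(x))$ in $\Delta(x)$. For the remaining $\psi(\pi,t)$ piece, Cauchy--Schwarz with respect to the measure $dt/t$ gives
\[
\left|\int_2^x\frac{\psi(\pi,t)}{\sqrt{t}\,\log^2(t)}\,dt\right|^2\leq\int_2^x|\psi(\pi,t)|^2\,\frac{dt}{t}\cdot\int_2^x\frac{dt}{\log^4(t)}=O(\log(x))\cdot O(x/\log^4(x)),
\]
where the first factor is $O(\log(x))$ after expanding the explicit formula for $\psi(\pi,t)$ and using the summability $\sum_{\gamma_\chi}1/(1/4+\gamma_\chi^2)<\infty$ (equivalently, from the finite variance of the limiting distribution of $\psi(\pi,t)$ supplied by Theorem~\ref{theorem: background 2}, paired with $\psi(\pi^\vee,\cdot)$ as in Remark~\ref{remark: ontheproof} when $\pi$ is not selfdual). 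This forces $\Delta(x)=O(1/\sqrt{\log(x)})=o(1)$, and combined with \eqref{eq:plan-B} one obtains \eqref{equation: psi(chi,x)}.
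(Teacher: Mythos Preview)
Your proof is correct and follows the same overall architecture as the paper's: apply Theorem~\ref{theorem: background 1} to $\psi(\pi,x)$, peel off the prime-power contributions to isolate the Frobenius--Schur term $u_\chi$, and then pass from the $\log|\fp|$-weighted sum $B(x)$ to $\psi(\chi,x)$ by partial summation, absorbing the resulting $o(1)$ into $R(x,T)$ via Remark~\ref{remark: opetitasqrtx}. The only substantive difference is in how you control the partial-summation error $\Delta(x)=\frac{\log x}{\sqrt{x}}\int_2^x B(t)\,dt/(t\log^2 t)$: the paper (Lemma~\ref{lemma: abel summation}) integrates by parts once more and invokes the pointwise bound $\int_2^x\vartheta(t)\,dt=O(x^{3/2})$ obtained by integrating the explicit formula term-by-term, whereas you use Cauchy--Schwarz together with the $L^2$-bound $\int_2^X|\psi(\pi,t)|^2\,dt/t=O(\log X)$ coming from the finite second moment of the limiting distribution. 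Both arguments yield $\Delta(x)=o(1)$.

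One small correction: the bad-prime contribution to $\sqrt{x}\,\psi(\pi,x)$ is $O(\log x)$ rather than $O(1)$ (finitely many primes, each contributing $O(\log x)$ from its prime powers up to $x$), but since $O(\log x)/\sqrt{x}=o(1)$ this does not affect anything downstream.
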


\begin{proof}
Let $\varrho$ be an irreducible representation of $G$ of character $\chi$, and let $d_\chi$ denote the degree of~$\varrho$. For $n\geq 1$, define the von-Mangold function $\Lambda_\chi(n)$ by prescribing an equality of Dirichlet series
\begin{equation}\label{equation: coefsdirser}
-\frac{L(\chi,s)'}{L(\chi,s)}=:\sum_{n\geq 1}\Lambda_{\chi}(n) n^{-s}\,.
\end{equation} 
For a prime $\fp$ of $k$, let $\alpha_{\fp,1},\dots, \alpha_{\fp,d_{\chi}(\fp)}$ denote the recyprocal roots of $L_\fp(\chi,T)$. Taking the logarithmic derivative in \eqref{equation: compL}, we find that
\begin{align}\label{eq: log der}
 -\frac{L(\chi,s)'}{L(\chi,s)} = \sum_{\fp} \sum_{j=1}^{d_\chi(\p)}\sum_{r\geq 1}\frac{\log (|\fp|) \alpha_{\p,j}^r}{|\fp|^{rs}}=\sum_{r\geq 1} \sum_{\fp} \log (|\fp|)\big(\sum_{j=1}^{d_\chi(\p)}\alpha_{\p,j}^r\big)|\fp|^{-rs}\,.
\end{align}
For $r\geq 1$, define\footnote{Note that we are using the same notation $\Lambda_\chi$ to denote two different functions; since one is a function of the positive integers and the other is a function of the ideals of $k$, their argument makes clear which one we refer to (when $k=\Q$ the two functions coincide).}
$$
\Lambda_{\chi}(\fp^r):=\log (|\fp|)\big(\sum_{j=1}^{d_\chi(\p)}\alpha_{\p,j}^r\big)\,.
$$
Note that, if $\fp\in P$, then $\alpha_{\fp,1},\dots, \alpha_{\fp,d_{\chi}(\fp)}$ are the eigenvalues of $\varrho(y_\fp)$ and thus $\Lambda_\chi(\fp^r)=\log (|\fp|)\chi(y_\fp^r)$. Observe also that we similarly find that 
$$
-\frac{L^P(\chi,s)'}{L^P(\chi,s)} =\sum_{r\geq 1} \sum_{\fp\in P} \Lambda_\chi(\fp^r)|\fp|^{-rs}\,,
$$
For $x>0$, define the function
\begin{equation}\label{eq: explicit formula}
\psi_{1}(\chi,x):=\frac{1}{\sqrt x}\sum_{n\leq x}\Lambda_\chi(n)=\frac{1}{\sqrt{x}}\sum_{r\geq 1}\sum_{|\fp|^r\leq x}\Lambda_\chi(\fp^r)\,,
\end{equation}
where the second equality follows from compairing \eqref{eq: log der} and \eqref{equation: coefsdirser}. Under Assumption~\ref{assumption: main}, there is an irreducible unitary cuspidal representation $\pi$ of $\GL_{d_\chi[k:\Q]}(\A_\Q)$ such that $\psi_{1}(\chi,x)$ coincides with $\psi(\pi,x)$, as defined in \eqref{eq: automorphicpsi} (note that $L(\pi,s) $ is not of the form $\zeta(s-i\tau_0)$ since $\chi$ is nontrivial). Then, by Theorem \ref{theorem: background 1}  we can write 
$$
\psi_{1}(\chi,x)=c_{1}+ S_{1}(x,T)+R_{1}(x,T)\,,
$$ 
where $c_{1}:=-2r_\chi$, $S_{1}(T):=-\sum_{0<|\gamma_\chi|\leq T}\frac{x^{i\gamma_\chi}}{1/2+i\gamma_\chi}$  and $R_{1}(x,T)$ satisfies \eqref{equation: cond 2}.
Define
$$
\psi_2(\chi,x):=\frac{1}{\sqrt x}\sum_{|\fp|\leq x}\Lambda_\chi(\fp)\,,\qquad \Delta_1(x):=\frac{1}{\sqrt x}\left( \sum_{|\fp|\leq x^{1/2}} \Lambda_{\chi}(\fp^2) + \sum_{k\geq 3}\sum_{|\fp|\leq x^{1/k}}\Lambda_{\chi}(\fp^k)\right)\,,
$$
so that $\psi_1(\chi,x)= \psi_2(\chi,x)+\Delta_1(x)$. We proceed to study the size of $\Delta_1(x)$. On the one hand, we have 
\begin{align*}
  \sum_{|\fp|\leq x^{1/k}}\Lambda_\chi(\fp^k)\leq d_\chi \sum_{|\fp|\leq x^{1/k}}\log(|\fp|) \leq d_\chi[k:\Q] x^{1/k}\log(x)\,.
\end{align*}
Since there is no prime ideal $\fp$ of~$k$ such that $|\fp|\leq x^{1/k}$ when $k>\log_2(x)$, the sum indexed by~$k$ in $\Delta_1(x)$ has at most $O(\log (x))$ summands. Therefore the right most summand in $\Delta_1(x)$ is $O(x^{1/3}\log^2 (x))$.
On the other hand, \eqref{equation: equidtrick} says that
$$
\sum_{|\fp|\leq x^{1/2}} \Lambda_{\chi}(\fp^2)= \delta (\chi(\cdot^2))\sqrt x+ o(\sqrt x),
$$
where $\chi(\cdot^2)$ denotes the central function $g\mapsto \chi(g^2)$. Note that the multiplicity $\delta (\chi(\cdot^2))$ equals $u_\chi:=\int_G \chi(g^2)\mu_G(g)$, the so-called Frobenius--Schur index of $\chi$. It is known that $u_\chi=0$ if $\chi$ takes some complex nonreal value, $u_\chi=1$ if $\chi$ is attached to a representation realizable over $\R$, and $u_\chi=-1$ if $\chi$ is attached to a quaternionic representation (i.e., a representation that is not realizable over $\R$, even if its character takes values in $\R$).
We deduce that 
$$
\psi_2(\chi,x)= \psi_1(\chi,x)-u_\chi+ o(1)\,,
$$
and by Remark~\ref{remark: opetitasqrtx}, we deduce that 
\begin{equation}\label{equation: psi1}
\psi_2(\chi,x)=c_2+S_2(x,T)+R_2(x,T)\,,
\end{equation} 
where $c_2:=c_1-u_\chi$, $S_2(x,T):=S_1(x,T)$, and $R_2(x,T)$ satisfies \eqref{equation: cond 2}.
For $x>0$, define the function
$$
\psi_3(\chi,x):=\frac{1}{\sqrt{x}}\sum_{|\fp|\leq x,\p\in P}\Lambda_\chi(\fp)\,,
$$
so that we have that 
$$
\psi_2(\chi,x)=\psi_3(\chi,x)+\Delta_2(x)\,,\qquad\text{where}\qquad\Delta_2(x):=\frac{1}{\sqrt x}\sum_{\p|\fN}\Lambda_\chi(\fp)\,. 
$$
Provided that there is only a finite number of primes dividing $\fN$, we have that $\Delta_2(x)=o(1)$.
But, by the Abel summation trick (see Lemma~\ref{lemma: abel summation} below), we have that $\psi_3(\chi,x)=\psi(\chi,x)+o(1)$. It follows from Remark~\ref{remark: opetitasqrtx}, that $\psi(\chi,x)$ admits an expression of the form $\psi(\chi,x)=c+S(x,T)+R(x,T)$, with $c=c_2$, $S(x,T):=S_2(x,T)$, and $R(x,T)$ satisfies \eqref{equation: cond 2}. 

\end{proof}
 
At the end of the previous proof we have applied the Abel summation trick. Following the lines of reasoning of \cite[Lemma 2.1]{RS}, we give the argument involved in gory detail.
\begin{lemma}\label{lemma: abel summation}
For a nontrivial character $\chi$ of $G$, we have 
  \begin{align}
   \sum_{|\fp|\leq x}\log(|\fp|)\chi(y_\fp)= \log(x)\sum_{|\fp|\leq x}\chi(y_\fp)+o(\sqrt x)\,.
  \end{align}
\end{lemma}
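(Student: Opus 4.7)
The plan is to reduce, via two successive Abel summations, to a single integral that the explicit formula of Theorem~\ref{theorem: background 1} can handle. A key technical observation is a clean cancellation obtained when combining the two summations.

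First, set $S(t):=\sum_{|\fp|\le t}\chi(y_\fp)$ and let $T(x)$ denote the left-hand sum of the lemma. Writing $\log(x/|\fp|)=\int_{|\fp|}^{x} dt/t$ and exchanging the finite sum with the integral gives
\[
\log(x)\,S(x) - T(x) \;=\; \int_2^x \frac{S(t)}{t}\,dt,
\]
so the lemma reduces to showing $\int_2^x S(t)/t\,dt = o(\sqrt x)$. A second Abel summation with weight $1/\log t$ yields $S(t) = T(t)/\log t + \int_2^t T(s)/(s\log^2 s)\,ds + O(1)$. Inserting this into $\int_2^x S(t)/t\,dt$ and exchanging orders in the resulting double integral, one finds that $\int_2^x T(t)/(t\log t)\,dt$ appears twice with opposite signs (produced by the identity $\log t + \log(x/t) = \log x$) and cancels, leaving
\[
\int_2^x \frac{S(t)}{t}\,dt \;=\; \log(x)\int_2^x \frac{T(t)}{t\log^2 t}\,dt + O(\log x).
\]
Thus it suffices to prove $\int_2^x T(t)/(t\log^2 t)\,dt = o(\sqrt x/\log x)$.

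Next I would substitute into this remaining integral the identification $T(t)/\sqrt t = \psi_3(\chi,t) = \psi(\pi,t) - u_\chi + o(1)$, already established in the proof of Proposition~\ref{proposition: irred chars} before this lemma is invoked, together with the explicit formula from Theorem~\ref{theorem: background 1},
\[
\psi(\pi,t) \;=\; -2r_\chi - \sum_{0<|\gamma_\chi|\le T_0}\frac{t^{i\gamma_\chi}}{1/2+i\gamma_\chi} + R_\pi(t,T_0),\qquad T_0\ge 2.
\]
This breaks the integral into four pieces: a constant contribution, an oscillatory contribution over zeros, an $R_\pi$ remainder, and an $o(1)$ correction. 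The constant piece is immediately $O(\sqrt x/\log^2 x)$ from the elementary estimate $\int_2^x dt/(\sqrt t\log^2 t) = O(\sqrt x/\log^2 x)$. For each oscillatory integral $\int_2^x t^{-1/2+i\gamma_\chi}/\log^2 t\,dt$, integration by parts with $u=1/\log^2 t$ and $dv=t^{-1/2+i\gamma_\chi}dt$ extracts a factor $1/|\gamma_\chi|$; combined with the factor $1/|1/2+i\gamma_\chi|$ already present in the explicit formula and with the Riemann--von Mangoldt bound $\sum_{\gamma_\chi}1/|\gamma_\chi|^2<\infty$, this bounds the oscillatory total by $O(\sqrt x/\log^2 x)$. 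The $R_\pi$ piece, on choosing $T_0=x$, is subordinate by the bounds in Theorem~\ref{theorem: background 1}, and the $o(1)$ piece is subordinate by the argument of Remark~\ref{remark: opetitasqrtx}. Altogether $\int_2^x T(t)/(t\log^2 t)\,dt = O(\sqrt x/\log^2 x)$, and hence $\int_2^x S(t)/t\,dt = O(\sqrt x/\log x) = o(\sqrt x)$, as required.

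The main obstacle is the oscillatory estimate: squeezing the $1/|\gamma_\chi|$-saving out of the integration by parts uniformly in $\gamma_\chi$ --- with proper control of the boundary contribution at $t=2$ and of the tail integrals produced by the IBP --- so that, after the additional factor $1/|1/2+i\gamma_\chi|$ from the explicit formula, the series over zeros is absolutely convergent and yields the claimed $O(\sqrt x/\log^2 x)$ bound.
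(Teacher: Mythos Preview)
Your argument is correct and ultimately lands on the same key integral as the paper, namely showing
\[
\int_2^x \frac{T(t)}{t\log^2 t}\,dt \;=\; o\!\left(\frac{\sqrt x}{\log x}\right).
\]
However, the route you take to reach this integral is needlessly circuitous: your second Abel summation is exactly the paper's single partial summation
\[
S(x)=\frac{T(x)}{\log x}+\int_2^x\frac{T(t)}{t\log^2 t}\,dt,
\]
which by itself already yields $\log(x)S(x)-T(x)=\log(x)\int_2^x T(t)/(t\log^2 t)\,dt$. Your first Abel summation and the subsequent ``clean cancellation'' simply undo and redo this same identity, so they can be dropped entirely.

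For the key integral, you substitute the explicit formula into $T(t)/\sqrt t$ and treat the constant, oscillatory, $R_\pi$, and $o(1)$ pieces separately, with an IBP on each oscillatory term to gain the extra factor $|1/2+i\gamma_\chi|^{-1}$. This works, but the paper's treatment is more economical: it first forms
\[
G(x)=\int_2^x \vartheta(t)\,dt
\]
by integrating the explicit formula once (letting the truncation height go to infinity and using absolute convergence of $\sum_{\gamma_\chi}\big((1/2)^2+\gamma_\chi^2\big)^{-1/2}\big((3/2)^2+\gamma_\chi^2\big)^{-1/2}$), obtains the single crude bound $G(x)=O(x^{3/2})$, and then does one integration by parts with $G$ in place of your term-by-term IBP. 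That avoids separately chasing the $R_\pi$ contribution and the boundary terms at $t=2$ for each zero.
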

\begin{proof}
Let us write 
$$
\vartheta(x):=\sqrt x\cdot \psi_3(\chi,x)=\sum_{|\fp|\leq x}\log(|\fp|)\chi(y_\fp)\,.
$$
Applying summation by parts\footnote{We apply the formula of \cite[Chap. I, \S 1.5]{IK}, with $f(p^r)=\sum_{|\fp|=p^r}\log(|\fp|)\chi(y_\fp)$ (and $f(n)=0$ if $n$ is not a prime power) and $g(x)=1/\log(x)$.}, we have
\begin{align}\label{eq: 8}
  \sum_{|\fp|\leq x}\chi(y_\fp)=\frac{\vartheta(x)}{\log(x)}+\int_{2}^x \frac{\vartheta(t)}{t\log^2 (t)}dt\,.
\end{align}
Then, integrating \eqref{equation: psi1} times $\sqrt x$,  letting $T$ tend to infinity, and using that $\beta<1/2$ we obtain 
\begin{align}
 G(x):=\int_{2}^x \vartheta(t)dt=\frac{2}{3}(-2r_{\chi}-u_{\chi})x^{3/2} - \sum_{\gamma_{\chi}\neq 0}\frac{x^{\frac{3}{2}+i\gamma_{\chi}}}{(\frac{1}{2}+i\gamma_{\chi})(\frac{3}{2}+i\gamma_{\chi})}+o(x^{3/2}).
\end{align}
As in the proof of \cite[Lemma 2.1]{RS}, here we are using the fact that the series over~$\gamma_{\chi}$ converges absolutely. This follows from the asymptotic formula for the number $N(T)$ of zeros on the critical line with imaginary part of absolute value $\leq T$ for automorphic $L$-functions, as in for example \cite[Theorem 5.8]{IK}. It follows that $G(x)=O(x^{3/2})$.

Integrating by parts the rightmost integral in \eqref{eq: 8}, we obtain
$$
\int_{2}^x \frac{\vartheta(t)}{t\log^2 (t)}dt=\frac{G(x)}{x\log^2 x}+\int_2^xG(t)\frac{\log(t)+2}{t^2\log^3(t)}dt= O\left(\frac{\sqrt x}{\log^2(x)}\right)+O\left( \int_2 ^x\frac{dt}{\sqrt t\cdot \log^2(t)}\right)\,. 
$$ 
Both $O$-terms in the above expression are $o\left(\frac{\sqrt{x}}{\log x}\right)$ (the first by trivial reasons and the second, by l'H\^opital, for example). We conclude that
\begin{align}
    \sum_{|\fp|\leq x} \chi (y_\fp)=\frac{\vartheta(x)}{\log(x)}+o\left(\frac{\sqrt{x}}{\log (x)}\right),
\end{align}
and this implies the lemma.
\end{proof}
\begin{corollary}\label{corollary: psi and bar psi}
  With the notation and assumptions as in Proposition \ref{proposition: irred chars} and for $c_\chi\in\C$ we have that 
  \begin{align*}
   c_\chi \psi(\chi,x)+\bar c_\chi \psi(\bar\chi,x)= -2(c_\chi r_\chi+\bar c_\chi r_{\bar\chi}) -  (c_\chi u_\chi+\bar c_\chi u_{\bar\chi}) +S(x,T) + R(x,T),
  \end{align*}
where $S(x,T)$ and $R(x,T)$ are as in \eqref{equation: special form}, $R(x,T)$ satisfies \eqref{equation: cond 2}, and $S(x,T)$ satisfies \eqref{equation: cond 1} with $\{\gamma_n\}_{n\geq 1}$ being the positive imaginary parts of the zeros of $L(\chi,s)$ and $L(\bar\chi,s)$, $\eta_n:=\frac{-2c_\chi}{1/2+i\gamma_n}$ if $\gamma_n$ corresponds a zero of $L(\chi,s)$, $\eta_n:=\frac{-2\bar c_\chi}{1/2+i\gamma_n}$ if $\gamma_n$ corresponds to a zero of $L(\bar\chi,s)$, and $\theta=1/2$.
\end{corollary}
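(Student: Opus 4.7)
My plan is to apply Proposition~\ref{proposition: irred chars} separately to $\chi$ and to $\bar\chi$, then combine the two expansions linearly with weights $c_\chi$ and $\bar c_\chi$. The constants and Frobenius--Schur terms combine at once to give $-2(c_\chi r_\chi+\bar c_\chi r_{\bar\chi})-(c_\chi u_\chi+\bar c_\chi u_{\bar\chi})$, and the error term $R(x,T):=c_\chi R_\chi(x,T)+\bar c_\chi R_{\bar\chi}(x,T)$ satisfies \eqref{equation: cond 2} via the elementary inequality $|a+b|^2\le 2(|a|^2+|b|^2)$ applied inside the $L^2$-integral defining \eqref{equation: cond 2}.

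The substantive step is to reorganize the sum
\[
-c_\chi\!\!\!\sum_{0<|\gamma_\chi|\le T}\!\!\!\frac{x^{i\gamma_\chi}}{1/2+i\gamma_\chi}\;-\;\bar c_\chi\!\!\!\sum_{0<|\gamma_{\bar\chi}|\le T}\!\!\!\frac{x^{i\gamma_{\bar\chi}}}{1/2+i\gamma_{\bar\chi}}
\]
into the form $\Re\bigl(\sum_n \eta_n x^{i\gamma_n}\bigr)$ with the weights prescribed in the statement. The key input is the functional relation $L(\bar\chi,s)=\overline{L(\chi,\bar s)}$, which follows from taking complex conjugates of Dirichlet coefficients; in particular the nontrivial zeros of $L(\bar\chi,s)$ are precisely the complex conjugates of those of $L(\chi,s)$. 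Consequently, the negative imaginary parts of zeros of $L(\chi,s)$ are the negatives of the positive imaginary parts of zeros of $L(\bar\chi,s)$, and vice versa. Grouping accordingly, each positive imaginary part $\gamma$ of a zero of $L(\chi,s)$ contributes
\[
-c_\chi\frac{x^{i\gamma}}{1/2+i\gamma}\;-\;\bar c_\chi\frac{x^{-i\gamma}}{1/2-i\gamma}\;=\;2\Re\!\left(\frac{-c_\chi}{1/2+i\gamma}\,x^{i\gamma}\right),
\]
yielding $\eta_n=-2c_\chi/(1/2+i\gamma_n)$; symmetrically, a positive imaginary part $\gamma$ of a zero of $L(\bar\chi,s)$ yields $\eta_n=-2\bar c_\chi/(1/2+i\gamma_n)$. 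Concatenating the two families produces the sequence $\{\gamma_n\}$ described in the corollary.

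It remains to verify \eqref{equation: cond 1} with $\theta=1/2$. Since $|\eta_n|^2\le 4\max(|c_\chi|,|\bar c_\chi|)^2/(1/4+\gamma_n^2)$, the required estimate splits into two pieces, one indexed by the zeros of $L(\chi,s)$ and one by those of $L(\bar\chi,s)$, each of which is precisely the condition established in Remark~\ref{remark: ontheproof} for a single automorphic $L$-function; taking $\theta=1/2$ comfortably absorbs both contributions. The only real obstacle in the argument is the careful bookkeeping of the identification of the zeros of $L(\chi,s)$ and $L(\bar\chi,s)$ under complex conjugation when $\chi$ is not self-dual, so that the cross-pairing between the $\chi$-sum and the $\bar\chi$-sum produces genuine real-part expressions; once this is laid out, the remainder is just linearity applied to the two instances of Proposition~\ref{proposition: irred chars}.
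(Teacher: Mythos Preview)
Your proposal is correct and follows essentially the same approach as the paper's proof: apply Proposition~\ref{proposition: irred chars} to $\chi$ and to $\bar\chi$, add with weights $c_\chi$ and $\bar c_\chi$, and pair each zero $\tfrac12+i\gamma$ of $L(\chi,s)$ with its complex conjugate $\tfrac12-i\gamma$, which is a zero of $L(\bar\chi,s)$ (and symmetrically), to rewrite the zero-sum as a real part. Your write-up simply makes explicit several details the paper leaves to the reader, namely the $|a+b|^2\le 2(|a|^2+|b|^2)$ bound for the error term and the verification of \eqref{equation: cond 1} via Remark~\ref{remark: ontheproof}.
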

\begin{proof}
  This follows at once by adding the expressions for $c_\chi \psi(\chi,x)$ and $\bar c_\chi \psi(\bar\chi,x)$ given in \eqref{equation: psi(chi,x)} and pairing each zero $\frac{1}{2}+i\gamma_\chi$ of $L(\chi,s)$ having $\gamma_\chi>0$ with its conjugate, which is a zero of $L(\bar\chi,s)$ (and, similarly, pairing each zero $\frac{1}{2}+i\gamma_{\bar\chi}$ of $L(\bar\chi,s)$ having $\gamma_{\bar\chi}>0$ with its conjugate, which is a zero of $L(\chi,s)$).
\end{proof}
\begin{remark}
  Observe that, in fact, $r_\chi =r_{\bar \chi}$ and $u_\chi = u_{\bar \chi}$.
\end{remark}

\begin{theorem}
Under Assumption~\ref{assumption: main}, for every virtual character of the form $\varphi=\sum_{\chi\not =1} c_\chi \chi$, where $c_\chi\in \C$ and $\chi$ runs over the irreducible nontrivial characters of $G$, one has
\begin{equation}\label{eq:310}
  \lim_{X\ra \infty} \frac{1}{\log X}\int_2^X \psi(\varphi,x)\frac{dx}{x} = - \sum_{\chi}c_\chi(2r_\chi+u_\chi).
\end{equation}

\begin{equation}\label{eq:311}
  \lim_{X\ra \infty} \frac{1}{\log X}\int_2^X |\psi(\varphi,x)|^2\frac{dx}{x}=\big|\sum_{\chi}c_\chi(2r_\chi+u_\chi)\big|^2+\sum_\chi\sum_{\gamma_\chi\neq 0}\frac{|c_\chi|^2}{1/4+\gamma_\chi^2}\,,
\end{equation}



where $\psi(\varphi,x):=\sum_{\chi}c_\chi \psi(\chi,x)$ and $r_\chi$, $u_\chi$, and $\gamma_\chi$ are as in the statement of Theorem \ref{theorem: main}.
\end{theorem}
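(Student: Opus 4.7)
The plan is to apply Theorem~\ref{theorem: background 2} to the real and imaginary parts of $\psi(\varphi,x)$ separately, computing their expectations and variances with the help of Corollary~\ref{corollary: psi and bar psi}, and then recombining.

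Since $\overline{\chi(y_\fp)}=\bar\chi(y_\fp)$ we have $\overline{\psi(\chi,x)}=\psi(\bar\chi,x)$, and hence
\begin{align*}
  2\Re\psi(\varphi,x)&=\sum_\chi\bigl[c_\chi\psi(\chi,x)+\bar c_\chi\psi(\bar\chi,x)\bigr],\\
  2\Im\psi(\varphi,x)&=\sum_\chi\bigl[(-ic_\chi)\psi(\chi,x)+\overline{(-ic_\chi)}\,\psi(\bar\chi,x)\bigr].
\end{align*}
Both are finite sums of terms to which Corollary~\ref{corollary: psi and bar psi} applies (the second is obtained by replacing $c_\chi$ with $-ic_\chi$ there), so both are real-valued functions of the form \eqref{equation: special form} satisfying \eqref{equation: cond 1} and \eqref{equation: cond 2}. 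Using $r_\chi=r_{\bar\chi}$ and $u_\chi=u_{\bar\chi}$ from the remark after Corollary~\ref{corollary: psi and bar psi}, the constant terms collapse respectively to $-2\Re A$ and $-2\Im A$, where $A:=\sum_\chi c_\chi(2r_\chi+u_\chi)$.

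For \eqref{eq:310}, I would apply Theorem~\ref{theorem: background 2} with $f(t)=t$ to each real part and recombine by linearity to obtain $\tfrac{1}{2}(-2\Re A)+\tfrac{i}{2}(-2\Im A)=-A$. For \eqref{eq:311}, I would combine the identity $|\psi(\varphi,x)|^2=\tfrac14\bigl[(2\Re\psi(\varphi,x))^2+(2\Im\psi(\varphi,x))^2\bigr]$ with Theorem~\ref{theorem: background 2} applied with $f(t)=t^2$, using that $\mathrm E[\mu_\psi^2]=c^2+\mathrm V[\mu_\psi]$. The squared expectations contribute $(\Re A)^2+(\Im A)^2=|A|^2$, so the task reduces to identifying $\tfrac14\bigl(\mathrm V[\mu_{2\Re\psi}]+\mathrm V[\mu_{2\Im\psi}]\bigr)$ with the double sum in~\eqref{eq:311}.

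This identification is where I expect the main calculation to lie. By Corollary~\ref{corollary: psi and bar psi}, at each positive imaginary part $\gamma$ of a zero of $L(\chi,s)$ two contributions enter the $S$-expansion of $2\Re\psi(\varphi,x)$: one from the $\chi$-indexed summand, and one from the $\bar\chi$-indexed summand (whose $\psi(\bar{\bar\chi},x)=\psi(\chi,x)$ component carries the same zeros), totalling $-2(c_\chi+\bar c_{\bar\chi})/(\tfrac12+i\gamma)$; analogously, the coefficient for $2\Im\psi(\varphi,x)$ is $2i(c_\chi-\bar c_{\bar\chi})/(\tfrac12+i\gamma)$. Applying the variance formula $\mathrm V[\mu_\psi]=\tfrac12\sum|\eta_n|^2$ from Theorem~\ref{theorem: background 2} together with the parallelogram identity $|a+b|^2+|a-b|^2=2(|a|^2+|b|^2)$ collapses the sum of variances to
\[\tfrac14\bigl(\mathrm V[\mu_{2\Re\psi}]+\mathrm V[\mu_{2\Im\psi}]\bigr)=\sum_\chi\sum_{\gamma>0\text{ of }L(\chi,s)}\frac{|c_\chi|^2+|c_{\bar\chi}|^2}{\tfrac14+\gamma^2}.\]
Finally, reindexing the $|c_{\bar\chi}|^2$ piece by $\chi\leftrightarrow\bar\chi$ and using the functional equation to identify positive zeros of $L(\bar\chi,s)$ with negative zeros of $L(\chi,s)$, the two halves fuse into $\sum_\chi|c_\chi|^2\sum_{\gamma_\chi\neq 0}(\tfrac14+\gamma_\chi^2)^{-1}$, which is precisely~\eqref{eq:311}.
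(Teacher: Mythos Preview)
Your proposal is correct and follows essentially the same route as the paper: the paper decomposes $\varphi=\varphi_R+i\varphi_I$ with $\varphi_R=(\varphi+\bar\varphi)/2$ and $\varphi_I=(\varphi-\bar\varphi)/2i$ selfdual, which is exactly your real/imaginary split since $\psi(\varphi_R,x)=\Re\psi(\varphi,x)$ and $\psi(\varphi_I,x)=\Im\psi(\varphi,x)$, and then carries out the identical variance computation via the parallelogram law and the same $\chi\leftrightarrow\bar\chi$ reindexing. The only organizational difference is that the paper first isolates the selfdual case as a separate step before reducing to it, whereas you proceed directly.
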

\begin{proof}
We begin by assuming that $\varphi$ is selfdual. We say that a virtual character $\varphi = \sum_{\chi}c_\chi \chi$, where $c_\chi\in \C$ and $\chi$ runs over irreducible characters of $G$, is selfdual if\footnote{Observe that this condition is trivially satisfied if $\chi$ is selfdual, since $\chi\simeq\bar\chi$ in this case.} $c_\chi=\bar c_{\bar\chi}$, where $\bar\chi$ denotes the dual (or complex conjugate) of $\chi$. 

By Corollary \ref{corollary: psi and bar psi} and Theorem \ref{theorem: background 2}, we have that for each of the irreducible constituents $\chi$ of $\varphi$, the function $c_\chi \psi(\chi,x)+c_{\bar \chi}\psi(\bar\chi,x)$ admits a limiting distribution $\mu_\chi$ with
$$
\mathrm E[\mu_\chi]=-2(c_{\chi} r_\chi+c_{\bar \chi}r_{\bar\chi})-(c_{\chi} u_\chi+c_{\bar \chi}u_{\bar\chi})$$

 $$
 \mathrm V[\mu_\chi]= \sum_{\gamma_\chi>0}\frac{{2}|c_\chi|^2}{1/4+\gamma_\chi ^2}+\sum_{\gamma_{\bar\chi}>0}\frac{{2}|\bar c_\chi|^2}{1/4+\gamma_{\bar\chi}^2} .
$$
 By additivity and the fact that $\varphi$ is selfdual, it follows again from Theorem \ref{theorem: background 2} that the function $\psi(\varphi,x)$ admits a limiting distribution $\mu_\varphi$ with
$$
\mathrm E[\mu_\varphi]=-\sum_{\chi}c_\chi(2r_\chi+u_\chi)\,,\qquad \mathrm V[\mu_\varphi]= {2}\sum_{\chi}\sum_{\gamma_\chi>0}\frac{|c_\chi|^2}{1/4+\gamma_\chi ^2}\,.
$$
Now \eqref{eq:310} is a restatement of the above formula for the mean and \eqref{eq:311} is obtained as
$$
\lim_{X\ra \infty}\frac{1}{\log (X)}\int _2 ^X\psi(\varphi,x)^2\frac{dx}{x}=\mathrm E[\mu_\varphi]^2+\mathrm V[\mu_\varphi]=\left(\sum_{\chi}c_\chi(2r_\chi+u_\chi)\right) ^2+{2}\sum_{\chi}\sum_{\gamma_\chi>0}\frac{|c_\chi| ^2}{1/4+\gamma_\chi ^2}\,.
$$
Now consider the case where $\varphi$ is not selfdual. Then we can write $\varphi = \varphi_R+i\varphi_I$, where $$\varphi_R := \frac{\varphi+\bar\varphi}{2},\ \ \text{ and } \ \ \varphi_I := \frac{\varphi-\bar\varphi}{2i} .$$
One easily checks that $\varphi_R$ and $\varphi_I$ are virtual selfdual characters, and this immediately implies \eqref{eq:310}. As for \eqref{eq:311}, note that one has
\begin{align*}
  |\psi(\varphi,x)|^2=\psi(\varphi_R,x)^2+\psi(\varphi_I,x)^2.
\end{align*}
Applying the previous case to both terms, and noting that 
\begin{align*}
  \varphi_R = \sum_{\chi}\frac{c_\chi+\bar c_{\bar\chi}}{2}\chi, \ \   \varphi_I = \sum_{\chi}\frac{c_\chi-\bar c_{\bar\chi}}{2i}\chi,
\end{align*}
we get
\begin{align*}
  \lim_{X\ra \infty} \frac{1}{\log X}\int_2^X |\psi(\varphi,x)|^2\frac{dx}{x}=\big(\sum_{\chi}\frac{c_\chi+\bar c_{\bar\chi}}{2}(2r_\chi+u_\chi)\big)^2+\frac{1}{2}\sum_\chi\sum_{\gamma_\chi> 0}\frac{|c_\chi+\bar c_{\bar\chi}|^2}{1/4+\gamma_\chi^2} \\ -  \big(\sum_{\chi}\frac{c_\chi-\bar c_{\bar\chi}}{2}(2r_\chi+u_\chi)\big)^2+\frac{1}{2}\sum_\chi\sum_{\gamma_\chi> 0}\frac{|c_\chi-\bar c_{\bar\chi}|^2}{1/4+\gamma_\chi^2}.
\end{align*}
The first and third term in the right hand side of the above expression add up to $$\big|\sum_{\chi}c_\chi(2r_\chi+u_\chi)\big|^2,$$ and the second and fourth, applying parallelogram's law, add up to
\begin{align*}
  \sum_\chi \sum_{\gamma_\chi>0}\frac{|c_\chi|^2}{\frac{1}{4}+\gamma_\chi^2} +  \sum_\chi \sum_{\gamma_\chi>0}\frac{| c_{\bar \chi}|^2}{\frac{1}{4}+\gamma_\chi^2};
\end{align*}
Now using that
\begin{align*}
  \sum_\chi \sum_{\gamma_\chi>0}\frac{| c_{\bar \chi}|^2}{\frac{1}{4}+\gamma_\chi^2} = \sum_\chi \sum_{\gamma_{\bar\chi>0}}\frac{| c_{ \chi}|^2}{\frac{1}{4}+\gamma_{\bar\chi}^2}= \sum_\chi \sum_{\gamma_\chi<0}\frac{| c_{ \chi}|^2}{\frac{1}{4}+\gamma_\chi^2}
\end{align*}
we conclude the proof.
\end{proof}
We can now prove Theorem~\ref{theorem: main}. Indeed, it follows from \eqref{eq:311}, Lemma~\ref{lemma: prolim} and the fact that 
$$
\frac{|\psi(\varphi,x)|^2}{x}\sim |\delta(\varphi,x)|^2\qquad \text{as}\qquad x\ra \infty\, .$$

\section{An upper bound for the asymptotic $L^2$-norm}\label{section: bounrank}

In this section we elaborate on Theorem~\ref{theorem: main} to obtain an explicit and simple upper bound for the asymptotic $L^2$-norm, which we will use in the applications discussed in \S\ref{section: comments}. Since in \S\ref{section: comments} we will only be interested in selfdual virtual characters with integral coefficients, from now on we will make the simplifying assumption that $\varphi\in\bigoplus_{\chi\neq 1}\Z \cdot \chi$ and $c_\chi=c_{\bar\chi}$. We remark that all the constants appearing in this section are absolute and effectively computable.

We will need the following consequence of Assumption~\ref{assumption: main} (1), which we record as a remark for future reference.

\begin{remark}\label{remark: completedL} If $\chi$ is an irreducible nontrivial character of $G$, as a consequence of the fact that $L(\chi,s)$ is automorphic, we have that (see \cite[\S5]{IK}) there exist a positive integer $N_\chi$ (the so-called \emph{absolute conductor} of $\chi$) and complex numbers $\kappa_{\chi,j}$ with $\Re(\kappa_{\chi,j})>-1$ for $j=1,\dots, d_\chi$ (the so-called \emph{local parameters at infinity} of $L(\chi,s)$), such that the completed $L$-function
\begin{equation}\label{equation: completedL}
\Lambda(\chi,s):=N_\chi^{s/2}\Gamma(\chi,s)L(\chi,s)\,,\qquad \text{with }\Gamma(\chi,s):=\pi^{-d_\chi s/2}\prod_{j=1}^{d_\chi}\Gamma\left(\frac{s+\kappa_{\chi,j}}{2}\right)\,,  
\end{equation}
defined for  $\Re(s)>1$, extends to an analytic function on $\C$ and satisfies a functional equation
\begin{equation}\label{equation: functional}
\Lambda(\chi,s)=\varepsilon_\chi\Lambda(\overline \chi,1-s)\,.
\end{equation}
Here, $\varepsilon_\chi\in \C$ satisfies $|\varepsilon_\chi|=1$, and 
\begin{equation}\label{equation: loc par bound}
|\kappa_{\chi,j}|\leq w_\chi \,,\qquad N_\chi \leq N^{d_\chi}\,,
\end{equation}
where $N:=|\fN|$ and $w_\chi$ is the weight\footnote{We define the weight $w_\chi$ of an irreducible character $\chi$ of $G$ as the weight of the $\ell$-adic representation $\chi\circ \varrho_A$.} of $\chi$. For the precise definitions of $N_\chi$ and $\kappa_{\chi,j}$, we refer to \cite{Ser69}. The inequalities in \eqref{equation: loc par bound} follow directly from the definitions. 
\end{remark}

\begin{proposition}\label{proposition: boundL2} Under Assumption~\ref{assumption: main}, there exists an absolute constant $K_1>0$ such that, for every virtual selfdual character $\varphi=\sum_{\chi\not =1} c_\chi \chi$ not containing the trivial character, one has 
$$
I(\varphi)\leq K_1 \left( \big(\sum_{\chi}|c_\chi| S_\chi\big)^2+\sum_{\chi}c_\chi^2S_\chi\right)\,,
$$
where
$$
S_\chi:=d_\chi[k:\Q]\log( N(w_\chi+3))\,.
$$

\end{proposition}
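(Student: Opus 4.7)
The plan is to start from the closed form for $I(\varphi)$ provided by Theorem~\ref{theorem: main} and bound each of its two summands pointwise in $\chi$ by an absolute constant times $S_\chi$. Writing $\varphi=\sum_{\chi\neq 1}c_\chi\chi$ and using that $c_\chi\in\Z$ (so $|c_\chi|^2=c_\chi^2$), it would suffice to establish the two estimates
$$
|2r_\chi+u_\chi|\ll S_\chi,\qquad \sum_{\gamma_\chi\neq 0}\frac{1}{1/4+\gamma_\chi^2}\ll S_\chi,
$$
with absolute implicit constants, for every nontrivial irreducible character $\chi$ of $G$. Given these, the triangle inequality delivers $\bigl|\sum_\chi c_\chi(2r_\chi+u_\chi)\bigr|^2\ll\bigl(\sum_\chi|c_\chi|S_\chi\bigr)^2$ and $\sum_\chi|c_\chi|^2\sum_{\gamma_\chi\neq 0}(1/4+\gamma_\chi^2)^{-1}\ll\sum_\chi c_\chi^2 S_\chi$, so the proposition follows by assembling these inequalities with an appropriate $K_1$.

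First I would reinterpret $S_\chi$ as a bound for the logarithm of the analytic conductor of $L(\chi,s)$. Setting $C_\chi:=N_\chi\prod_{j=1}^{d_\chi[k:\Q]}(3+|\kappa_{\chi,j}|)$ and combining with the inequalities $N_\chi\le N^{d_\chi}$ and $|\kappa_{\chi,j}|\le w_\chi$ recorded in \eqref{equation: loc par bound}, one obtains
$$
\log C_\chi\le d_\chi\log N+d_\chi[k:\Q]\log(w_\chi+3)\le d_\chi[k:\Q]\log\bigl(N(w_\chi+3)\bigr)=S_\chi.
$$
Hence it is enough to prove both estimates above with $S_\chi$ replaced by $\log C_\chi$.

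Both targets are standard consequences of the functional equation \eqref{equation: functional} and the Hadamard factorization of the entire function $\Lambda(\chi,s)$, which has order one. For the first, the zero-counting estimate of \cite[Thm.~5.8]{IK} gives $\#\{\rho:|\gamma|\le 1\}\ll\log C_\chi$, whence $r_\chi\ll\log C_\chi$; combined with $|u_\chi|\le 1$ this yields $|2r_\chi+u_\chi|\ll\log C_\chi$. For the second, taking real parts of the logarithmic derivative $-\Lambda'/\Lambda(\chi,s)$ at $s=2$ and using the Hadamard product gives the classical bound $\sum_\rho(1+\gamma^2)^{-1}\ll\log C_\chi$ (see \cite[\S5]{IK}); under Assumption~\ref{assumption: main}(2) the zeros are $\rho=1/2+i\gamma_\chi$, and the elementary inequality $1/4+\gamma_\chi^2\ge\tfrac14(1+\gamma_\chi^2)$ then produces the required estimate up to a factor of four.

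The only real obstacle is invoking the two analytic inputs in a form uniform in the automorphic data $(C_\chi,d_\chi[k:\Q])$; their proofs ultimately rest on precise growth bounds for $\Lambda(\chi,s)$ via Stirling and the functional equation, but this is carried out in sufficient generality for general $\GL_n$ automorphic $L$-functions in \cite[Ch.~5]{IK}. Once cited, the remaining argument is bookkeeping via triangle inequalities and the bound $\log C_\chi\le S_\chi$ established above.
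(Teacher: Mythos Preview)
Your proposal is correct and follows essentially the same strategy as the paper: decompose $I(\varphi)=I_1(\varphi)+I_2(\varphi)$ via Theorem~\ref{theorem: main}, then bound $r_\chi$ and $\sum_{\gamma_\chi}(1/4+\gamma_\chi^2)^{-1}$ separately by absolute constants times $S_\chi$, using the Hadamard factorization and the conductor bounds~\eqref{equation: loc par bound}. The only packaging difference is in the treatment of $I_2$: the paper first isolates a local zero-count estimate (Lemma~\ref{lemma: num zeros}, bounding the number of zeros in each window $|\gamma_\chi-T|\le 1$) and then sums over unit windows, whereas you go directly to the global bound $\sum_\rho(1+\gamma^2)^{-1}\ll\log C_\chi$ obtained from $\Re(\Lambda'/\Lambda)$ at a fixed point; both routes rest on the same logarithmic-derivative identity, and yours is marginally more economical here since it avoids summing the auxiliary series $\sum_{m\ge 1}\log(m+5)/(1/4+(m-1)^2)$.
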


\begin{proof}
By Theorem~\ref{theorem: main}, we have  $I(\varphi)=I_1(\varphi)+I_2(\varphi)$, where 
\begin{equation}\label{equation: I1I2}
I_1(\varphi):=\left(\sum_\chi c_\chi(2r_\chi+u_\chi)\right)^2\,,\qquad I_2(\varphi):=\sum_\chi \sum_{\gamma_\chi>0}\frac{2c_\chi^2}{1/4+\gamma_\chi^2}\,.
\end{equation}
Note that $r_\chi\leq m(\chi,0)$, where $m(\chi,0)$ is as in Lemma~\ref{lemma: num zeros} below. The Lemma implies then that $I_1(\varphi)\leq K_2\cdot (\sum_\chi c_\chi S_\chi)^2$ for some $K_2> 0$. It suffices to prove that there exists $K_3> 0$ such that 
$$
\sum_{\rho_\chi}\frac{1}{|\rho_\chi|^2}\leq K_3S_\chi\,,
$$
where $\rho_\chi$ runs over the set zeros of $L(\chi,s)$ (equiv. $\Lambda(\chi,s)$) on the critical line.
Again by Lemma~\ref{lemma: num zeros} below, there exists $K_4>0$ such that
$$
\sum_{\gamma_\chi}\frac{1}{1/4+\gamma_\chi^2}\leq K_4\left(\sum_{m\geq 1}d_\chi\log\big(N (m+5)(w_\chi +3)\big)\frac{1}{1/4+(m-1)^2}\right)\,.
$$ 
and the proposition follows.
\end{proof}

The next result is well known. Provided that it is usually presented only in a form asymptotic in $T$ (which is not precise enough for our purposes), we have decided to include it in the form that we will require; that is, in the form of a statement valid for any $T\geq 0$. 

\begin{lemma}\label{lemma: num zeros}
Under Assumption~\ref{assumption: main}, for any irreducible character $\chi$ and any $T\geq 0$, the number $m(\chi,T)$ of zeros $\rho_\chi=1/2+\gamma_\chi$ (counted with multiplicity) of $\Lambda(\chi,s)$ with $|\gamma_\chi-T|\leq 1$ satisfies
$$
m(\chi,T)\leq K_4d_\chi[k:\Q]\log(N(T+5)(w_\chi+3))\,, 
$$
for an absolute constant $K_4>0$.
\end{lemma}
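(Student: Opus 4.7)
The plan is to exploit the Hadamard factorization of the completed $L$-function $\Lambda(\chi,s)$ (an entire function of order $1$ by Assumption~\ref{assumption: main}(1) and Remark~\ref{remark: completedL}), then detect zeros in a unit window around height $T$ via the size of $(\Lambda'/\Lambda)(2+iT)$, and finally bound that logarithmic derivative by splitting it according to \eqref{equation: completedL} into a conductor piece, an archimedean (gamma) piece, and a finite (Dirichlet series) piece.

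\textbf{Step 1 (Hadamard product).} Because $\Lambda(\chi,s)$ is entire of order one, there exist constants $A_\chi,B_\chi\in\C$ such that
\[
\Lambda(\chi,s)=e^{A_\chi+B_\chi s}\prod_{\rho}\bigl(1-s/\rho\bigr)e^{s/\rho},
\]
where the product runs over the nontrivial zeros $\rho=1/2+i\gamma_\chi$ of $L(\chi,s)$. Differentiating logarithmically and taking real parts at $s=2+iT$, the classical identity $\Re(B_\chi)=-\sum_\rho\Re(1/\rho)$ (which follows from the functional equation \eqref{equation: functional} pairing zeros of $\Lambda(\chi,s)$ with those of $\Lambda(\bar\chi,1-s)$) yields
\[
\Re\frac{\Lambda'}{\Lambda}(2+iT)\;=\;\sum_{\rho}\Re\frac{1}{2+iT-\rho}.
\]
Every term on the right is positive, since $0<\Re(\rho)<1$.

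\textbf{Step 2 (Lower bound from the window).} For any zero $\rho=1/2+i\gamma_\chi$ with $|\gamma_\chi-T|\leq 1$, a direct computation gives
\[
\Re\frac{1}{2+iT-\rho}=\frac{3/2}{(3/2)^2+(T-\gamma_\chi)^2}\;\geq\;\frac{3/2}{9/4+1}\;=\;\frac{6}{13},
\]
so that $m(\chi,T)\leq\tfrac{13}{6}\,\Re(\Lambda'/\Lambda)(2+iT)$. It therefore suffices to bound the right-hand side.

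\textbf{Step 3 (Splitting the logarithmic derivative).} From \eqref{equation: completedL},
\[
\frac{\Lambda'}{\Lambda}(s)=\tfrac{1}{2}\log N_\chi \;-\;\tfrac{d_\chi[k:\Q]}{2}\log\pi\;+\;\tfrac{1}{2}\sum_{j=1}^{d_\chi[k:\Q]}\frac{\Gamma'}{\Gamma}\!\Bigl(\tfrac{s+\kappa_{\chi,j}}{2}\Bigr)\;+\;\frac{L'}{L}(\chi,s).
\]
By \eqref{equation: loc par bound}, the conductor term is bounded by $\tfrac{d_\chi[k:\Q]}{2}\log N$. Stirling's estimate $|(\Gamma'/\Gamma)(z)|=O(\log(|z|+2))$ for $\Re(z)\geq 1/2$, combined with $|\kappa_{\chi,j}|\leq w_\chi$, bounds each archimedean summand by $O(\log(T+w_\chi+3))$, giving a total of $O(d_\chi[k:\Q]\log(T+w_\chi+3))$. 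Finally, since $\Re(2+iT)=2$ lies strictly inside the region of absolute convergence of the Dirichlet series for $L(\chi,s)$ (the local Euler factors satisfy $|\alpha_{\fp,j}|<|\fp|^{1/2}$ and in fact $|\alpha_{\fp,j}|\leq |\fp|^{1/2-\delta}$ for some $\delta>0$ by known bounds towards Ramanujan for cuspidal automorphic $L$-functions), one has
\[
\bigl|(L'/L)(\chi,2+iT)\bigr|\;\leq\;\sum_{n\geq 1}\frac{|\Lambda_\chi(n)|}{n^{2}}\;=\;O\bigl(d_\chi[k:\Q]\bigr).
\]

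\textbf{Step 4 (Combining).} Adding the three bounds produces
\[
\Re\frac{\Lambda'}{\Lambda}(2+iT)\;\leq\;C\,d_\chi[k:\Q]\bigl(\log N+\log(T+w_\chi+3)+1\bigr),
\]
and using the elementary inequality $T+w_\chi+3\leq (T+5)(w_\chi+3)$ one rewrites this as $K_4\,d_\chi[k:\Q]\log\bigl(N(T+5)(w_\chi+3)\bigr)$ for a suitable absolute constant $K_4$. Combined with Step~2 this is the claim.

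The main technical obstacle is Step~3: one must be careful that the Stirling bound applies uniformly in the archimedean parameters $\kappa_{\chi,j}$ (which is why the factor $(w_\chi+3)$ appears), and one needs the mild bound $|\alpha_{\fp,j}|\leq|\fp|^{1/2-\delta}$ to guarantee absolute convergence of $L'/L$ at $s=2+iT$; everything else is bookkeeping.
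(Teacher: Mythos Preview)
Your proof is correct and follows essentially the same route as the paper: Hadamard factorization of $\Lambda(\chi,s)$, evaluate $\Re(\Lambda'/\Lambda)$ at $s=2+iT$, extract the lower bound $\tfrac{6}{13}m(\chi,T)$ from the zeros in the unit window, and bound the logarithmic derivative from above by splitting it into conductor, gamma, and Dirichlet pieces. The only cosmetic difference is that the paper bounds $|L'/L(\chi,2+iT)|$ by comparison with $d_\chi[k:\Q]\,|\zeta'/\zeta(2+iT)|$ rather than by direct absolute convergence of the Dirichlet series, and cites the analytic-conductor estimate of Iwaniec--Kowalec for the gamma piece instead of invoking Stirling by hand; the content is the same.
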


\begin{proof}
By taking the logarithmic derivative of Hadamard's factorization of $\Lambda(\chi,s)$ (see \cite[Thm. 5.6]{IK}) and using \cite[(5.29)]{IK}, one obtains 
$$
\Re\left( \frac{\Lambda(\chi,s)'}{\Lambda(\chi,s)}\right)=\sum_{\rho_\chi}\Re\left(\frac{1}{s-\rho_\chi}\right)\,, 
$$
for any $s\in \C$ distinct from a zero of $\Lambda(\chi,s)$. Thus, by taking $s=2+iT$, we get on the one hand
$$
\left | \frac{\Lambda(\chi,2+iT)'}{\Lambda(\chi,2+iT)}\right|\geq \sum_{\gamma_\chi}\frac{3/2}{(3/2)^2+(T-\gamma_\chi)^2}\geq \sum_{|\gamma_\chi-T|\leq 1}\frac{3/2}{(3/2)^2+(T-\gamma_\chi)^2}\geq \frac{6}{13}m(\chi,T)\,.
$$
On the other hand, by logarithmically differentiating \eqref{equation: completedL}, we obtain
$$
\left|\frac{\Lambda(\chi,2+iT)'}{\Lambda(\chi,2+iT)}\right|\leq\frac{1}{2}\log N_\chi+\left|\frac{\Gamma'(\chi,2+iT)}{\Gamma(\chi,2+iT)}\right|+\left|\frac{L'(\chi,2+iT)}{L(\chi,2+iT)}\right|\,.
$$ 
But 
$$
\left|\frac{L'(\chi,2+iT)}{L(\chi,2+iT)}\right|\leq d_\chi[k:\Q] \left| \frac{\zeta(2+iT)'}{\zeta(2+iT)}\right|
$$
and by \cite[(5.116)]{IK} and \cite[(5.8)]{IK}, we have that there exists $K_5>0$ such that
$$
\left|\frac{\Gamma'(\chi,2+iT)}{\Gamma(\chi,2+iT)}\right|\leq K_5\cdot \log(N_\chi\prod_{j=1}^{d_\chi}(|\kappa_{\chi,j}|+3)(T+5)^{d_\chi})\,.
$$
The lemma now follows from \eqref{equation: loc par bound} applied to the previous inequality.
\end{proof}

A direct application of the Cauchy-Schwarz inequality yields the following corollary. 

\begin{corollary}\label{corollary: upper bound}
Assume the hypotheses of Proposition~\ref{proposition: boundL2}. Let $S_\varphi$ be $\max_\chi\{S_\chi\}$, let $R_\varphi$ denote the number of irreducible constituents of $\varphi$, and write $C_\varphi:=\sum_\chi c_\chi^2$. Then there exists $K_6>0$ such that
$$
I(\varphi)\leq K_6 R_\varphi S_\varphi^2 C_\varphi\,.
$$
\end{corollary}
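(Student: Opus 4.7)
The plan is to start from Proposition~\ref{proposition: boundL2}, which gives the bound
\[
I(\varphi)\leq K_1\left(\bigl(\sum_\chi |c_\chi|S_\chi\bigr)^2+\sum_\chi c_\chi^2 S_\chi\right),
\]
and then control each of the two summands separately using only Cauchy--Schwarz and the crude bound $S_\chi\leq S_\varphi$.

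First I would handle the cross term $\bigl(\sum_\chi |c_\chi|S_\chi\bigr)^2$. Applying Cauchy--Schwarz to the inner sum (which runs over the $R_\varphi$ irreducible constituents of $\varphi$), I get
\[
\bigl(\sum_\chi |c_\chi|S_\chi\bigr)^2\leq\bigl(\sum_\chi c_\chi^2\bigr)\bigl(\sum_\chi S_\chi^2\bigr)\leq C_\varphi\cdot R_\varphi S_\varphi^2,
\]
since each $S_\chi\leq S_\varphi$ by definition of $S_\varphi:=\max_\chi S_\chi$, and the number of terms in $\sum_\chi S_\chi^2$ is exactly $R_\varphi$.

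Next, for the second summand I would use the trivial bound
\[
\sum_\chi c_\chi^2 S_\chi\leq S_\varphi\sum_\chi c_\chi^2 = S_\varphi C_\varphi.
\]
Combining the two bounds gives
\[
I(\varphi)\leq K_1\bigl(R_\varphi S_\varphi^2 C_\varphi+S_\varphi C_\varphi\bigr).
\]
Finally, since $R_\varphi\geq 1$ and (inspecting the definition of $S_\chi$, which involves $\log(N(w_\chi+3))$ with $N\geq 1$ and $w_\chi\geq 0$) one has $S_\varphi\geq 1$, so that $S_\varphi\leq R_\varphi S_\varphi^2$, the second term is absorbed into the first and the asserted inequality $I(\varphi)\leq K_6 R_\varphi S_\varphi^2 C_\varphi$ holds with $K_6:=2K_1$.

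There is no substantial obstacle here: the result is a direct algebraic consequence of Proposition~\ref{proposition: boundL2} and Cauchy--Schwarz, which is precisely what the statement of the corollary announces. The only mild point to verify carefully is the bound $S_\varphi\geq 1$, so that the first term dominates and one gets a single clean expression in $R_\varphi$, $S_\varphi$, and $C_\varphi$.
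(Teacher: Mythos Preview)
Your proof is correct and matches the paper's approach exactly: the paper simply records that the corollary follows from Proposition~\ref{proposition: boundL2} by a direct application of Cauchy--Schwarz, and your argument spells this out. The only minor point is that $S_\varphi\geq 1$ actually follows because $\log(N(w_\chi+3))\geq\log 3>1$ (even if $N=1$), which you essentially note.
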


\section{An application to the numerical identification of Sato--Tate groups}\label{section: comments}

Let the notations be as in \S\ref{section: intro}. In particular, $A$ is an abelian variety defined over a number field $k$ of dimension~$g\geq 1$,~$G$ denotes the Sato--Tate group of $A$, and $\fp$ is a prime of $k$ of good reduction for $A$.

In recent years the development of fast methods for the computation of the polynomial $L_\fp(A,T)$ (as defined in \eqref{equation: Lpoly}) has made possible numerical approaches to the identification of the group~$G$, under the assumption of its conjectural equidistribution property (see \cite{FKRS12}, \cite{KS09} for example). 

Let us briefly describe these numerical approaches. One starts by selecting a family of virtual characters $\{\varphi_n\}_{n}$ of $\USp(2g)$ such that $\{\delta(\varphi_n|_G)\}_{n}$ identifies~$G$ as a subgroup up to conjugation inside $\USp(2g)$. For simplicity, throughout this section we will assume that $\varphi$ is selfdual and has integral coefficients. Then one can find an expression for $\varphi_n(y_\fp)$ in terms of $L_\fp(A,T)$ and thanks to the efficient methods of computation of $L_\fp(A,T)$, one can compute the sum
$$
\delta(\varphi_n,x):=\frac{1}{\pi(x)}\sum_{|\fp|<x}\varphi_n(y_\fp)
$$
for some large value of $x>0$. By \eqref{equation: char of equid}, we expect $\delta(\varphi_n,x)$ approach the multiplicity $\delta(\varphi_n|_G)$ of the trivial representation in $\varphi_n|_{G}$ as $x\ra \infty$.
If $X$ is large enough, the quantities $\delta(\varphi_n,X)$ will provide good approximations of the integer $\delta(\varphi_n|_G)$. 

By \cite[Rmk. 3.3]{FKRS12} it is expected that, for a fixed $g$, the set of possibilities for $G$ is finite. This is known to be true for $g\leq 3$ by Theorem 2.16 and Proposition 3.2 of \cite{FKRS12}. For example, if $g=1$ there are $3$ possibilities and if $g=2$ there are 52. Therefore, in practice the family $\{\varphi_n \}_n$ can be taken to be finite.

\begin{example}
Let $V$ denote the standard representation of $\USp(2g)$. For $n\geq 0$ and $0\leq k\leq 2g$, define the \emph{$n$-th moment of the $k$-th coefficient} as the character 
\begin{equation}\label{equation: nthmomentchar}
a_k^n:=\Tr\left((\Lambda^kV)^{\otimes n}\right)\,.
\end{equation}
Write simply $a_k:=a^1_k$. For $g=2$, one can see from Tables 9 and 10 of \cite{FKRS12} that the sequence of multiplicities\footnote{Note that $\delta(a_k^n)$ is denoted by $M_n[a_k]$ in \cite{FKRS12}.} $\{\delta(a_1^n|_G), \delta(a_2^n|_G)\}_{n}$ for $n=1,\dots, 7$ identifies the group~$G$. In \cite[\S5]{FKRS12}, the method described above for the family of coefficient moments $\{a_1^n,a_2^n\}_{n}$ is used to numerically identify Sato--Tate groups of several abelian surfaces.
\end{example}

One may wonder whether the efficiency of the above approach varies depending on the choice of the testing family of virtual characters. Shieh \cite{Shi16} has proposed to use the family of irreducible characters $\{\chi_n\}_{n}$ of $\USp(2g)$ instead of the family of coefficient moments $\{a_k^n\}_{n,k}$. 
Shieh applies the Brauer-Klymik formula to recover $\chi_n(y_\fp)$ from $L_\fp(A,T)$, and presents some numerical examples, where the family $\{\chi_n\}_{n}$ exhibits a much faster convergence than $\{a_k^n\}_{n,k}$. As remarked by Shieh (see comment at the last paragraph of \cite[\S1]{Shi16}), the difference between convergence rates of the two families is especially noticeable in the ``generic cases", that is, when $G=\USp(2g)$.

For a virtual character~$\varphi$ of $\USp(2g)$, write
\begin{equation}\label{equation: tilde}
\tilde\varphi:=\varphi|_G-\delta(\varphi|_G)\,.
\end{equation}
As discussed in \S\ref{section: intro}, the asymptotic $L^2$-norm $I(\tilde\varphi)$ can be seen as an estimate of the convergence rate of $\delta(\varphi,X)$ towards $\delta(\varphi|_G)$. Under this perspective, we can see Corollary~\ref{corollary: upper bound} as a justification of the efficiency of Shieh's proposal in the generic cases. Indeed, when $\chi_n$ is a nontrivial irreducible character of $\USp(2g)$ and $G=\USp(2g)$, the quantities $R_{\tilde\chi_n}=C_{\tilde\chi_n}=1$ are smallest possible. \footnote{However, we do not claim optimality of the family of irreducible characters (in terms of the velocity of convergence) among the class of families of central functions (see \S\ref{example: charnonoptimal}).}

We remark, however, that $\{R_{\tilde\chi_n}\}_n$ and $\{C_{\tilde\chi_n}\}_n$ can grow without bound when we take certain nongeneric $G\subseteq \USp(2g)$. We will now introduce a family of virtual characters $\{s_n\}_n$ of $\USp(2g)$, for which the sequence $\{C_{\tilde s_n}\}_n$ (and thus also the sequence $\{R_{\tilde s_n}\}_n$) stays bounded for every Sato--Tate group $G\subseteq\USp(2g)$.
Let $V$ denote the standard representation of $\USp(2g)$. For $n\geq 0$ and $0\leq k\leq 2g$, the \emph{$(n,k)$-th power sum} is the virtual character
\begin{equation}
s_n^k(\cdot):=\Tr(\Lambda^kV(\cdot ^n))\,.
\end{equation}
To ease notation, we simply write $s_n:=s_n^1$.
We remark that the family of virtual characters $\{s_n\}_n$ was previously considered in \cite{KS09}. 

\begin{proposition}\label{proposition: weights} 
Let $G$ denote the Sato--Tate group of an abelian variety $A$ of dimension $g$. Then, the sequence $\{C_{\tilde s_n}\}_n$ is bounded. 
\end{proposition}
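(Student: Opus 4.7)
The plan is to use the orthogonality of irreducible characters (Plancherel/Parseval for compact groups) to convert the sum of squared coefficients $C_{\tilde s_n}$ into an $L^2$-integral against the Haar measure of $G$, and then to exploit the trivial pointwise bound on the trace of a matrix in $\USp(2g)$ to control that integral uniformly in $n$.

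Concretely, since $\tilde s_n = s_n|_G - \delta(s_n|_G)$ is simply $s_n|_G$ with its trivial-character component removed, Parseval gives
\begin{equation*}
C_{\tilde s_n} \;=\; \sum_{\chi \neq 1} c_\chi^2 \;=\; \int_G |s_n(g)|^2\,d\mu_G(g) - \delta(s_n|_G)^2 \;\leq\; \int_G |s_n(g)|^2\,d\mu_G(g).
\end{equation*}
The first step of the proof is precisely this identification, using that $s_n$ is a selfdual virtual character (hence the $c_\chi$ are real, and $c_\chi = c_{\bar\chi}$, so the sum $\sum c_\chi^2$ really is the character norm squared).

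For the second step, observe that for every $g \in G$ the element $g^n$ also lies in $G \subseteq \USp(2g)$, so $V(g^n)$ is a unitary symplectic matrix whose $2g$ eigenvalues all have absolute value $1$. Therefore
\begin{equation*}
|s_n(g)| \;=\; |\Tr(V(g^n))| \;\leq\; 2g
\end{equation*}
uniformly in $g$ and $n$. Combining with the inequality above yields $C_{\tilde s_n} \leq 4g^2$ for every $n \geq 0$, which is the desired uniform bound.

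There is no real obstacle here: the argument reduces to combining orthogonality of characters with the tautological sup-norm bound on a trace in $\USp(2g)$. The only point that deserves attention is the (brief) verification that $s_n$ is selfdual, so that the formula $C_\varphi = \sum c_\chi^2$ indeed coincides with $\langle \varphi, \varphi \rangle_G$; this follows from $\overline{s_n(g)} = \Tr(V(g^n)^{-1}) = \Tr(V(g^{-n})) = s_n(g^{-1})$, which equals the complex conjugate of $s_n$ as a character.
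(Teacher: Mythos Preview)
Your argument is correct and considerably simpler than the paper's. The paper proceeds via Lie theory: it writes $s_n$ as a sum of at most $2g$ weights of the Lie algebra $\mathfrak g$ of $G$, and then invokes a result of Gupta to the effect that each weight decomposes as a bounded $\Z$-linear combination of irreducible characters with bounded multiplicities. Your route bypasses all of this: Parseval for compact groups turns $C_{\tilde s_n}$ into (at most) $\int_G |s_n|^2\,d\mu_G$, and the pointwise bound $|s_n(g)|=|\Tr(V(g^n))|\le 2g$ (eigenvalues of a unitary symplectic matrix have absolute value $1$) gives the explicit uniform bound $C_{\tilde s_n}\le 4g^2$. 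This is both shorter and yields an explicit constant; it also sidesteps the paper's appeal to semisimplicity of $\mathfrak g$, which is not literally true for all Sato--Tate groups (e.g.\ $G=\U(1)$).

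One small comment on the last paragraph: your verification that $\overline{s_n(g)}=s_n(g^{-1})$ is the statement that $s_n$ is a class function coming from a virtual representation, not quite the selfduality condition $c_\chi=c_{\bar\chi}$. The cleanest way to get what you actually need (namely $c_\chi\in\Z$, so that $c_\chi^2=|c_\chi|^2$ and $C_{\tilde s_n}=\langle \tilde s_n,\tilde s_n\rangle_G$) is to note that $s_n$ is a $\Z$-linear combination of the exterior-power characters $a_k$ via Newton's identities, hence a virtual character with integer coefficients; alternatively, $s_n$ is real-valued because the eigenvalues of any element of $\USp(2g)$ occur in complex-conjugate pairs, so $\Tr(V(g^n))\in\R$. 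Either observation makes the Parseval step go through without further comment.
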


\begin{proof}
Let $\mathfrak g$ denote the Lie algebra of $G$. Group representations of $G$ correspond to Lie algebra representations of $\mathfrak g$. For every $n\geq 0$, one can write $s_n$ as a finite sum of at most $2g$ weights of $\mathfrak g$. Since $\mathfrak g$ is semisimple, by \cite[Thm. 3.8]{Gup87} each weight of $\mathfrak g$ can be written as a bounded finite sum of distinct irreducible characters of $\mathfrak g$. Moreover, one easily sees from its description in \cite[Thm. 3.8]{Gup87}, that the multiplicity of every irreducible constituent of a weight is also bounded. The proposition follows.
\end{proof}

In the next section, we compare the convergence rates of the families $\{s_n\}_n$ and $\{a_1^n\}_n$ by computing the respective explicit bounds of Corollary~\ref{corollary: upper bound} for some of the Sato--Tate groups $G$ arising for $g=1$ and $g=2$.

\subsection{Rate of convergence of moments and power sums}

\subsubsection{Dimension $g=1$: non CM case}

The Sato--Tate group of an elliptic curve over~$k$ without CM is $G:=\USp(2)=\mathrm{SU}(2)$. If we denote by $V$ the standard representation of $G$, then the irreducible characters of $G$ are given by
\begin{equation}\label{equation: charUSp2}
\chi_n:= \Tr(\Sym^n(V))\qquad \text{for $n\geq 0$}\,.
\end{equation} 
For a virtual character $\varphi$ of $G$, let $\tilde\varphi$ be as in \eqref{equation: tilde}, and let $R_{\tilde\varphi}$, $C_{\tilde\varphi}$, and $S_{\tilde\varphi}$ be as defined in Corollary~\ref{corollary: upper bound}. 
For $n\geq 2$, easy computations writing $a_1^n$ and $s_n$ in terms of the $\chi_n$ show that $S_{\tilde a_1^n}=S_{\tilde s_n}$ and that
\begin{align}\label{eq: ex 511}
R_{\tilde a_1^n}=\left\lceil \frac{n}{2}\right\rceil\,,\,C_{\tilde a_1^n}=1+\sum_{0<j<n/2}\left(\binom{n}{j}-\binom{n}{j-1}\right)^2\,,\quad\text{while}\quad R_{\tilde s_n}=2\,,\,C_{\tilde s_n}=2\,.
\end{align}
Indeed, if $M\in G$ has eigenvalues $\alpha$ and $\bar\alpha$ then 
\begin{align*}
  a_1(M)= \alpha + \bar\alpha \ \ \ \text{ and } \ \  \chi_n(M)=\alpha^n + \alpha^{n-2} +\dots + \alpha^{-n}.
\end{align*}
Thus, for example, in the case of $a_1^n$ for even values of $n$ one has
\begin{align*}
a_1^n =  \chi_n + \left( \binom{n}{1} -1 \right)\chi_{n-2}  + \left( \binom{n}{2} -\binom{n}{1} \right)\chi_{n-4}+\dots + \left( \binom{n}{n/2} -\binom{n}{n/2-1} \right)\chi_{0}\, .
\end{align*}
A similar expression can be derived for odd $n$ and also for $s_n$, and from this one obtains \eqref{eq: ex 511}.
\subsubsection{Dimension $g=1$: CM case} Let $E$ be an elliptic curve over~$k$ with CM defined over~$k$. Its Sato--Tate group is 
$$
G:=\left\{
A_u:=\begin{pmatrix}
u & 0\\
0 & \bar u
\end{pmatrix}\,:\,|u|=1
\right\}\simeq \U(1)\,.
$$ 
The irreducible characters of $G$ are 
\begin{equation}\label{equation: charU1}
\nu_m\colon \U(1)\ra \C^\times\,,\qquad \nu_m(A_u)=u^m\qquad \text{for }m\in\Z\,.
\end{equation}
For $n\geq 1$, writing $a_1^n$ and $s_n$ in terms of the $\nu_m$, one easily finds that $S_{\tilde a_1^n}=S_{\tilde s_n}$ and that
$$
R_{\tilde a_1^n}=2\left\lceil \frac{n}{2}\right\rceil\,,\,C_{\tilde a_1^n}=\sum_{0\leq j<n/2}2 \binom{n}{j}^2\,,\quad \text{while}\quad R_{\tilde s_n}=2\,,\,C_{\tilde s_n}=2\,.
$$

\subsubsection{Dimension $g=2$: generic case}\label{section: ex g2gen}

The Sato--Tate group of an abelian surface $A$ with trivial endomorphism ring is $G:=\USp(4)$. The conjugacy classes of $G$ are in bijection with $[0,\pi]^2/\mathfrak S_2$, where $\mathfrak S_2$ denotes the symmetric group on two letters.
From \cite[Thm. 7.8.C, Chap. VII]{Weyl} one finds that the irreducible characters $\chi_{m,n}$ of $G$, for $m\geq n\geq 0$, are given by the formula
\begin{equation}\label{equation: charUSp4}
\chi_{m,n}(\alpha,\beta)=\frac{\sin((m+2)\alpha)\sin((n+1)\beta)-\sin((m+2)\beta)\sin((n+1)\alpha)}{\sin(2\alpha)\sin(\beta)-\sin(2\beta)\sin(\alpha)}\,,
\end{equation}
where $(\alpha,\beta)\in[0,\pi]^2/\mathfrak S_2$. Note that $\chi_{1,0}$ is the trace of the standard representation of $G$. From \eqref{equation: charUSp4}, one finds by direct computation that
$$
s_n= \chi_{n,0}-\chi_{n-1,1}+\chi_{n-3,1}-\chi_{n-4,0}
$$
for $n\geq 4$. Similarly one computes that $s_2=-\chi_{0,0}-\chi_{1,1}+\chi_{2,0}$ and $s_3=-\chi_{2,1}+\chi_{3,0}$, and concludes that $R_{\tilde s_n},\,C_{\tilde s_n}\leq 4$. 

Let us show on the other hand that the sequence $\{R_{\tilde a_1^n}\}_n$ (and thus also $\{C_{\tilde a_1^n}\}_n$) grows unboundedly\footnote{Using \cite[\S9, Chap. VII]{Weyl}, one can obtain a closed formula for $C_{\tilde a_1^n}$, but we will not pursue this here. We will content ourselves with listing a few of them:
$$
C_{\tilde a_1^2}=1^2+1^2+1^2\,,\quad C_{\tilde a_1^3}=3^2+2^2+ 1^2\,,\quad C_{\tilde a_1^4}=3^2+5^2+6^2+2^2+3^2+1^2\,.
$$} in $n$.
Let now $V$ denote the standard representation of $\USp(4)$. Let $W$ be the representation defined by $\Lambda^2V=W\oplus \C$, which has character $\chi_{1,1}$. By \cite[p.248]{FH04} the character $\chi_{m,n}$ is the character of a subrepresentation\footnote{Note that in \cite[\S16.2]{FH04}, it is written $\Gamma_{m-n,n}$ to denote the representation that is attached to the character that we denote by $\chi_{m,n}$.} of
\begin{equation}\label{equation: subprod}
\Sym^{m-n}(V)\otimes\Sym^{n}(W)\,.
\end{equation}
But note that the above representation is a subrepresentation of $V^{(m+n)\otimes}$, from which we deduce that $a_1^t$ contains all the irreducible characters $\chi_{m,n}$ for which $m+n=t$ and $m\geq n$.

Finally, one can use \cite[Ex. 24.20]{FH04} to show that $S_{\tilde a_1^n}$ and $S_{\tilde s_n}$ are of comparable size.

\section{Examples}\label{section: examples}
In this section, we illustrate the content of Theorem~\ref{theorem: main} by means of considering a few examples. Except of the example in~\S\ref{example: charnonoptimal} (which has a different purpose), they all follow the same pattern: we take two abelian varieties~$A$ and~$A'$ defined over $k$, of the same dimension $g\geq 1$, and similar conductors, but with distinct arithmetic invariants such as the rank or the Sato--Tate group. Let~$G$ (resp.~$G'$) denote the Sato--Tate groups of~$A$ (resp.~$A'$). We take a selfdual character~$\varphi_0$ of $\USp(2g)$ such that its restriction~$\varphi$ (resp. $\varphi'$) to~$G$ (resp.~$G'$) does not contain the trivial character. We then plot the functions $\delta(\varphi,x)^2$ and $\delta(\varphi',x)^2$ for $x$ in a wide range of values, and certify that they exhibit the behaviour predicted by Theorem~\ref{theorem: main}.

Recall that we can write
$$
I(\varphi)=I_1(\varphi)+I_2(\varphi)\,,
$$
where $I_1(\varphi)$ and $I_2(\varphi)$ are as defined in \eqref{equation: I1I2}. If $\varphi=\sum_\chi c_\chi \chi$, let us write $r_\varphi:=\sum_\chi c_\chi r_\chi$. In the examples below, it will be convenient to use the suggestive notation $r_{\varphi_0(A)}:=r_{\varphi}$. In the case that~$\varphi$ is the character of the tautological representation of~$G$, we will simply write $r_A$ for $r_\varphi$. Note that~$r_A$ is the so called \emph{analytic rank of $A$}, that is, the order of vanishing of the Hasse--Weil $L$-function of~$A$ at the central point. By definition, we have
$$
I_1(\varphi)=(2r_\varphi+u_\varphi)^2\,,
$$
where $u_\varphi$ denotes the Frobenius--Schur index of $\varphi$. 

As one can see from the proof of Proposition~\ref{proposition: boundL2}, a large conductor $N$ of $A$ with respect to the rank makes the term $I_2(\varphi)$ dominant, blurring the contribution of $I_1(\varphi)$ in the asymptotic $L^2$-norm $I(\varphi)$. In order to illustrate the influence in the convergence rate of the arithmetic invariants of~$A$ appearing in $I_1(\varphi)$ (the rank and the Sato--Tate group), we will often consider examples for which $r_\varphi$ or $r_\varphi'$ is exceptionally large with respect to the conductor $N$. This is what we call examples of `relatively large rank with respect to the conductor'. In the examples considered, it is $I_1(\varphi)$ which happens to dominate $I_2(\varphi)$. 

In \S\ref{section: odd weight} and \S\ref{section: even weight}, we will use the following lemma.

\begin{lemma}\label{lemma: FrobSchur} 
Let $\nu_m$, $\chi_n$, and $\chi_{m,n}$ denote the irreducible characters of $\U(1)$, $\USp(2)$, and $\USp(4)$ as defined in \eqref{equation: charU1}, \eqref{equation: charUSp2}, and \eqref{equation: charUSp4}. Their respective Frobenius--Schur indices are 
$$
u_{\nu_m}=
\begin{cases}
1 & \text{if }m=0\,,\\
0 & \text{otherwise,}
\end{cases}
\qquad u_{\chi_n}=(-1)^n\,, \qquad u_{\chi_{m,n}}=(-1)^{n+m}\,.
$$
\end{lemma}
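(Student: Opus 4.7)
My approach is to use the standard characterization that $u_\chi\in\{+1,-1,0\}$ according as the irreducible representation of character $\chi$ is real, quaternionic, or not self-dual; equivalently, in the self-dual case, $u_\chi$ is the symmetry sign of the (unique up to scalar) $G$-invariant bilinear form on the representation.

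For $\U(1)$ the characters $\nu_m$ are one-dimensional and satisfy $\overline{\nu_m}=\nu_{-m}$, so $\nu_m$ is self-dual only when $m=0$. In that case $\nu_0$ is the trivial character, trivially real, giving $u_{\nu_0}=1$; for every $m\neq 0$ one has $u_{\nu_m}=0$.

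For $\USp(2)$ the standard representation $V$ admits a $G$-invariant alternating non-degenerate form $B_V$ (the defining symplectic form). The tensor form $B_V^{\otimes n}$ on $V^{\otimes n}$ picks up a sign $(-1)^n$ under swap of its two arguments, since each of the $n$ factors contributes one sign from the alternation of $B_V$; it restricts non-degenerately to the invariant subspace $\Sym^n V$ (whose character is $\chi_n$). By Schur's lemma this restriction is a non-zero scalar multiple of the unique invariant bilinear form on $\chi_n$, which therefore inherits the same symmetry sign, so $u_{\chi_n}=(-1)^n$.

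For $\USp(4)$, I would apply the analogous strategy to the decomposition recalled in \S\ref{section: ex g2gen}: the standard representation $V$ carries an invariant alternating form, and the $5$-dimensional representation $W$ of character $\chi_{1,1}$ carries an invariant symmetric form (since $W\subset\Lambda^2 V$ and the form induced on $\Lambda^2 V$ by $B_V$ is symmetric). Consequently the tensor form on $\Sym^{m-n}(V)\otimes\Sym^n(W)$ has symmetry sign $(-1)^{m-n}$. As recalled in \S\ref{section: ex g2gen}, $\chi_{m,n}$ appears with multiplicity one as an irreducible subrepresentation of this tensor product, so the restriction of the ambient form is $G$-invariant, hence by Schur either zero or a non-zero multiple of the unique invariant form on $\chi_{m,n}$; non-vanishing then yields $u_{\chi_{m,n}}=(-1)^{m-n}=(-1)^{m+n}$.

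The main obstacle I anticipate is precisely establishing this non-vanishing, since a priori a self-dual irreducible subrepresentation of a self-dual ambient could be totally isotropic. I would resolve it by noting that the weight-$\lambda$ subspace of $\Sym^{m-n}(V)\otimes\Sym^n(W)$, with $\lambda=(m-n)\omega_1+n\omega_2$ the highest weight of $\chi_{m,n}$, is one-dimensional (tensor of the unique highest-weight vectors in each factor) and lies in the Cartan component; by self-duality of the ambient bilinear form the same holds for the weight-$(-\lambda)$ subspace, and non-degeneracy of the ambient form forces the pairing of these two lines to be non-trivial, so the restriction to the Cartan component is non-zero. Alternatively, both symplectic computations follow immediately from the classical formula (see e.g.\ Bourbaki, \emph{Groupes et alg\`ebres de Lie}, Ch.~VIII) stating that for $\USp(2g)$ the Frobenius--Schur index of the irreducible representation of highest weight $\sum a_i\omega_i$ equals $(-1)^{a_1+a_3+\cdots}$.
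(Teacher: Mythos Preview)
Your proof is correct and follows essentially the same strategy as the paper's: both use the characterization of the Frobenius--Schur indicator via the symmetry type of the invariant bilinear form, note that the standard representation $V$ of $\USp(2g)$ is alternating while $W\subset\Lambda^2 V$ is symmetric, and then pass to the Cartan component $\chi_{m,n}\subset\Sym^{m-n}(V)\otimes\Sym^n(W)$. The one difference is that the paper simply asserts that ``a subrepresentation of an alternating (resp.\ symmetric) representation is alternating (resp.\ symmetric)'' without further comment, whereas you correctly flag that the restriction of the ambient form could in principle be degenerate and supply the multiplicity-one/highest-weight argument to rule this out; your alternative appeal to the Bourbaki formula for $\USp(2g)$ is also a clean way to conclude.
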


\begin{proof}
The statement for $\nu_m$ is clear, provided that $\nu_m$ takes complex values if and only if $m\not=0$. By \cite[Prop. 38]{Ser77}, a character $\chi$ of a compact group $G$ has $u_\chi=1$ (resp. $u_\chi=-1$) if it corresponds to a representation $\varrho$ which is symmetric (resp. alternating)\footnote{By this we mean that the module $V$ affording $\varrho$ possesses a nonzero nondegenerate $G$-invariant bilinear form which is symmetric (resp. alternating).}. Since the standard representation of $\USp(2)$ is alternating, we deduce that $u_{\chi_n}=(-1)^n$ from the fact that the product of an alternating representation and a symmetric one is alternating, and that the product of two alternating (resp. symmetric) representations is symmetric. Let $V$ denote the standard representation of $\USp(4)$ and~$W$ be the representation of $\USp(4)$ defined in \S\ref{section: ex g2gen}, so that $\chi_{m,n}$ is the character of a subrepresentation of $\Sym^{m-n}(V)\otimes \Sym^n(W)$. Note that since $V$ is alternating, we have that~$W$ is symmetric. Since a subrepresentation of an alternating (resp. symmetric) representation is alternating (resp. symmetric), we find that $u_{\chi_{m,n}}=(-1)^{m-n}=(-1)^{m+n}$. 
\end{proof}

\begin{remark} If $\chi$ is an irreducible character of $G$, recall that we denote by $w_\chi$ its weight (as defined in Remark~\ref{remark: completedL}). By deep conjectures (see \S\ref{section: even weight}) one expects a different behaviour of $r_{\chi}$ depending on whether~$w_\chi$ is odd or even. In order to isolate these two typical behaviors, and although there is no such restriction in Theorem~\ref{theorem: main}, in all the examples below we consider virtual characters $\varphi$ such that their constituents are either all of odd weight or all of even weight.
\end{remark}

\begin{remark}\label{remark: smalljac}
In order to determine the value of the function $\delta(\varphi,x)$, we use Sutherland's library Smalljac \cite{KS08}, which computes the polynomials $L_\fp(A,T)$. One limitation of the version that we use is that it only works for abelian varieties defined over $\Q$. However, this is enough to deal with abelian varieties over an arbitrary number field $k$ which are the base change of an abelian variety defined over $\Q$ (see \S\ref{example: overK}).
\end{remark}

\subsection{Odd weight}\label{section: odd weight}

In the following examples, all the irreducible constituents of $\varphi$ and $\varphi'$ have odd weight.

\subsubsection{Example 1: The rank matters ($g=1$).}\label{example: rank}
Let $A$, $A'$ be two elliptic curves defined over $\Q$ without CM (i.e. with Sato--Tate groups $G=G'=\USp(2)$) with similar conductor and analytic ranks $r_A=1$ and $r_{A'}=2$. Let $\varphi_0=a_1$ be the character of $\USp(2)$ defined in \eqref{equation: nthmomentchar}. Using Lemma~\ref{lemma: FrobSchur}, we obtain that
$$
I_1(\varphi)=(2\cdot 1-1)^2=1\,,\qquad I_1(\varphi')=(2\cdot 2-1)^2=9\,.
$$
In Figure \ref{fig: 1} we plot the function $\delta(a_1,x)^2$ for two elliptic curves $A$ and $A'$ of similar conductor and ranks $1$ and $2$. We indeed observe a better convergence of the curve of rank $1$.
\begin{figure}
	\centering
	\includegraphics[width=4.5in]{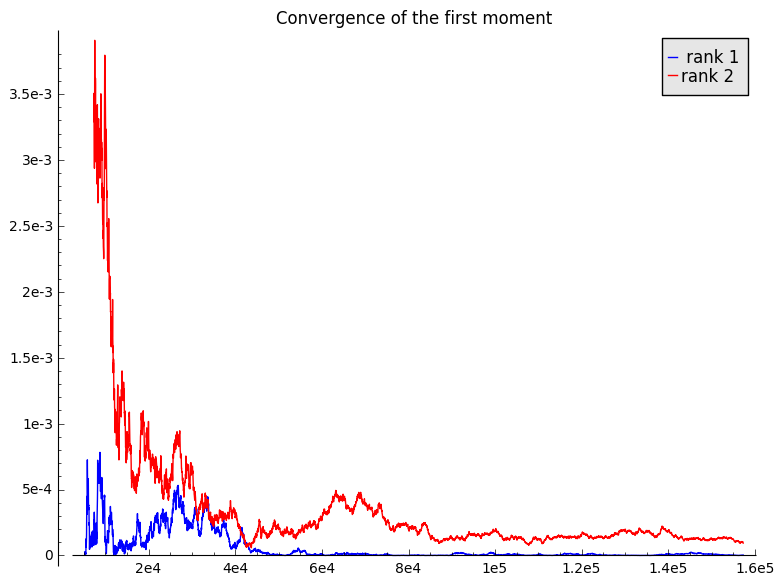}
	\caption
	{Plot of $\delta(a_1,x)^2$ for the non-CM elliptic curves with LMFDB label \href{http://www.lmfdb.org/EllipticCurve/Q/390/a/1}{390.a1} (of rank 1) and \href{http://www.lmfdb.org/EllipticCurve/Q/389/a/1}{389.a1} (of rank 2). \label{fig: 1}}
\end{figure}

\subsubsection{Example 2: The Frobenius--Schur index also matters ($g=1$).}
Let $A$, $A'$, and $\varphi_0$ be as in \S\ref{example: rank}, but suppose that now $r_A=1$ and $r_{A'}=0$. Now we have
$$
I_1(\varphi)=(2\cdot 1-1)^2=1\,,\qquad I_1(\varphi')=(2\cdot 0-1)^2=1\,.
$$
\begin{figure}
	\centering
	\includegraphics[width=4.5in]{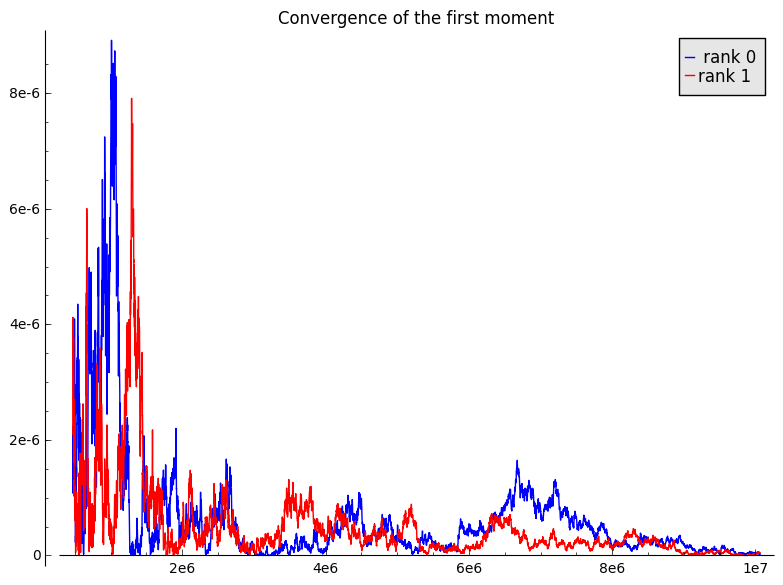}
	\caption
	{Plot of $\delta(a_1,x)^2$ for the non-CM elliptic curves with LMFDB label \href{http://www.lmfdb.org/EllipticCurve/Q/37/b/2}{37.b2} (of rank 0) and \href{http://www.lmfdb.org/EllipticCurve/Q/37/a/1}{37.a1} (of rank 1). \label{fig: 2}}
\end{figure}
In Figure \ref{fig: 2} we plot  $\delta(a_1,x)^2$ for two non-CM elliptic curves of the same conductor and ranks $0$ and $1$. In this case we observe a similar convergence rate.

\subsubsection{Example 3: The Sato--Tate group matters ($g=1$).} Let $A$ (resp. $A'$) be an elliptic curve defined over $\Q$ without (resp. with) complex multiplication, and let $\varphi_0=a_1^3$ be the cube of the trace character of $\USp(2)$. We have $G=\USp(2)$ and $G'=\U(1)\rtimes \Z/2\Z$. 

\begin{proposition}\label{proposition: a3Q}
If $\varphi_0$, $A$ and $A'$ are as above, then
$$
I_1(\varphi)=(4r_A+ 2 r_{\Sym^3(A)}-3)^2\,,\qquad I_1(\varphi')=(4r_{A'}+ 2 r_{\Sym^3(A')}-4)^2\,.
$$
\end{proposition}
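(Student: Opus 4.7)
The plan is to apply \eqref{equation: I1I2} in both cases, which reduces $I_1$ to computing the analytic ranks and Frobenius--Schur indices of the irreducible constituents of the restrictions $a_1^3|_G$ and $a_1^3|_{G'}$. One verifies along the way that neither restriction contains the trivial character, so that we remain in the setting of Theorem~\ref{theorem: main}.

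For the non-CM case, the Clebsch--Gordan decomposition $V^{\otimes 3}=\Sym^3 V\oplus 2V$ in $\USp(2)=\mathrm{SU}(2)$ yields $a_1^3=\chi_3+2\chi_1$, where $\chi_n$ denotes the character of $\Sym^n V$. The standard identification $L(\chi_n,s)=L(\Sym^n A,s)$ gives $r_{\chi_n}=r_{\Sym^n A}$, and Lemma~\ref{lemma: FrobSchur} yields $u_{\chi_1}=u_{\chi_3}=-1$. Plugging into \eqref{equation: I1I2} produces
$$I_1(\varphi)=\bigl(2(2r_A-1)+(2r_{\Sym^3 A}-1)\bigr)^2=(4r_A+2r_{\Sym^3 A}-3)^2.$$

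For the CM case, the group $G'$ is the normalizer of $\U(1)$ in $\USp(2)$. Its irreducible characters are $\mathbf{1}$, the non-trivial character $\epsilon$ of the quotient $G'/\U(1)$, and the two-dimensional induced characters $\rho_m=\Ind_{\U(1)}^{G'}\nu_m$ for $m\geq 1$. Examining $\U(1)$-weights shows that the tautological representation of $G'$ is $\rho_1$ and that $\Sym^3 V|_{G'}=\rho_3+\rho_1$. Restricting the previous decomposition gives
$$a_1^3|_{G'}=(\rho_3+\rho_1)+2\rho_1=\rho_3+3\rho_1.$$
Then $r_{\rho_1}=r_{A'}$, and the factorization $L(\Sym^3 A',s)=L(\rho_3,s)L(\rho_1,s)$ yields $r_{\rho_3}=r_{\Sym^3 A'}-r_{A'}$.

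Finally, for the Frobenius--Schur indices of the $\rho_m$ one observes that every element of the non-trivial coset of $\U(1)$ in $G'$ squares to $-I$ inside $\USp(2)$, so the integral formula collapses to $u_{\rho_m}=\tfrac12\chi_{\rho_m}(-I)=(-1)^m$, giving $u_{\rho_1}=u_{\rho_3}=-1$. Substituting into \eqref{equation: I1I2} yields
$$I_1(\varphi')=\bigl(2(r_{\Sym^3 A'}-r_{A'})-1+3(2r_{A'}-1)\bigr)^2=(4r_{A'}+2r_{\Sym^3 A'}-4)^2,$$
as claimed. The only delicate step is the computation of $u_{\rho_m}$, which is not an immediate consequence of Lemma~\ref{lemma: FrobSchur} and requires either the direct integral computation just sketched or the observation that $\rho_m$ inherits a non-degenerate alternating invariant form from the symplectic ambient representation.
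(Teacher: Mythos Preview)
Your proof is correct and follows essentially the same route as the paper: decompose $a_1^3$ into irreducibles on each group, identify the analytic ranks via the factorizations of $L(A',s)$ and $L(\Sym^3 A',s)$, and plug into \eqref{equation: I1I2}. If anything, you are more careful than the paper on one point: the paper cites Lemma~\ref{lemma: FrobSchur} for the Frobenius--Schur indices $u_{\varrho_m}$, but that lemma only treats $\U(1)$, $\USp(2)$, and $\USp(4)$, not the normalizer $G'$; your direct computation (using that elements of the nontrivial coset square to $-I$, so the $\U(1)$-part of the integral vanishes for $m\geq 1$ and the remaining half gives $\tfrac12\rho_m(-I)=(-1)^m$) and your alternative remark about inheriting the alternating form fill that gap cleanly.
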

\begin{proof}
From \cite[\S8.2]{Ser77}, one easily finds that the irreducible characters of $G'$ are
$$
\varrho_m:=\Tr(\Ind_{\U(1)}^{G'}(\nu_m))
$$
for $m\geq 0$, where $\nu_m$ is as in \eqref{equation: charU1}. Let $\sigma$ denote the nontrivial element of $\Z/2\Z$. For $(A_u,s)\in \U(1)\rtimes \Z/2\Z$, one has 
$$
\varrho_m( A_u, s )= 
\begin{cases}
u^m+u^{-m} & \text{if $s$ is trivial,}\\
0 & \text{if $s=\sigma$.}
\end{cases}
$$ 
It follows that $\varphi'=3\varrho_1+ \varrho_3$ and then, using Lemma~\ref{lemma: FrobSchur}, that
$$
I_1(\varphi')=(2(r_{\Sym^3(A')}-r_{A'})-1  + 3(2\cdot r_{A'}-1 ))^2 = (4 r_{A'}+2r_{\Sym^3(A')}-4)^2
$$
Provided that $\varphi=2\chi_1+\chi_3$, where $\chi_n$ is as in \eqref{equation: charUSp2}, we obtain that 
$$
I_1(\varphi)=\big((2\cdot r_{\chi_{3}}-1)+2(2\cdot r_{\chi_1}-1)\big)^2=(4r_A+ 2 r_{\Sym^3(A)}-3)^2\,,
$$
and the proposition follows.
\end{proof}
One can take now $A$ and $A'$ with similar conductors and such that $r_{\Sym^3(A)}=r_{\Sym^3(A')}$ and $r_A=r_{A'}$. If we take these analytic ranks to be respectively $1$ and $0$, we find
$$
I_1(\varphi)=1 \,,\qquad I_1(\varphi')=4\,.
$$
In Figure \ref{fig: 3} we plot $\delta(a_1^3,x)^2$ for two curves $A$, $A'$ with the above invariants. We observe that for large values of $x$, even though the two plots cross many times, the one corresponding to the CM curve seems to have a larger asymptotic $L^2$-norm. In fact, we computed that 
\begin{align*}
  I(\varphi,2^{32}) \simeq 4.08, \ \ \text{and} \ \ I(\varphi',2^{32})\simeq 7.67.
\end{align*}

\begin{figure}
	\centering
	\includegraphics[width=4.5in]{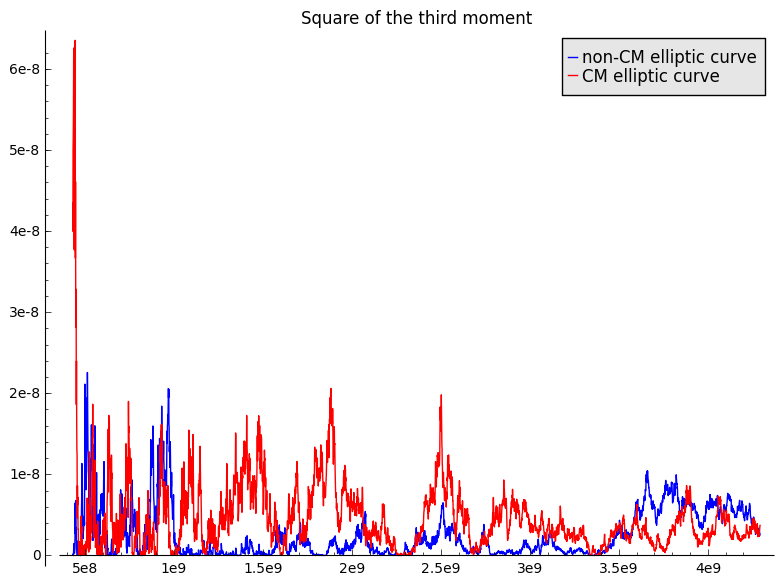}
	\caption
	{Plot of $\delta(a_1^3,x)^2$ for the rank $0$ curves \href{http://www.lmfdb.org/EllipticCurve/Q/40/a/1}{40.a1} (non-CM) and \href{http://www.lmfdb.org/EllipticCurve/Q/49/a/1}{49.a1} (CM).\label{fig: 3}}
\end{figure}

\begin{remark}
Note that, in the situation considered, if we had taken $\varphi_0=a_1$, no significant difference would have been observed between the convergence of $\delta(\varphi,x)$ and $\delta(\varphi',x)$. Indeed, in this case, one finds $I_1(\varphi)=(2\cdot r_A-1)^2=(2\cdot r_{A'}-1)^2=I_1(\varphi')$ (compare with~\S\ref{example: overK}).
\end{remark}

\subsubsection{Example 3': The Sato--Tate group matters ($g=1$).}\label{example: overK}
We will now consider an example over a finite extension of $\Q$. Let~$K$ be a quadratic imaginary field, $A$ be an elliptic curve defined over~$K$ without CM, and $A'$ an elliptic curve defined over $K$ with CM precisely by $K$. As in the previous example, let $\varphi_0=a_1^3$. Note that $\varphi'$ is a selfdual character of $\U(1)$, even if its irreducible constituents are not.

\begin{proposition} If $\varphi_0$, $A$ and $A'$ are as above, then
$$
I_1(\varphi)=(4r_A+ 2 r_{\Sym^3(A)}-3)^2\,,\qquad I_1(\varphi')=(4r_{A'}+ 2 r_{\Sym^3(A')})^2\,.
$$
\end{proposition}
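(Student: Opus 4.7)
The non-CM case is handled by the same argument as in Proposition~\ref{proposition: a3Q}: the Sato--Tate group of $A$ is $\USp(2)$ over any number field, so the decomposition $\varphi = a_1^3 = 2\chi_1 + \chi_3$ together with the Frobenius--Schur indices $u_{\chi_1} = u_{\chi_3} = -1$ from Lemma~\ref{lemma: FrobSchur} gives $I_1(\varphi) = (4 r_A + 2 r_{\Sym^3(A)} - 3)^2$ verbatim. I would simply note this and focus on the CM case.

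For $A'$, the first step is to identify $G'$. Because the complex multiplication of $A'$ is now defined over the base field $K$, the $\Z/2\Z$ outer twist present in Proposition~\ref{proposition: a3Q} disappears, and $G' = \U(1)$, with standard embedding into $\USp(2)$ having character $\nu_1 + \nu_{-1}$. Consequently
$$
\varphi' = (\nu_1 + \nu_{-1})^3 = \nu_3 + 3\nu_1 + 3\nu_{-1} + \nu_{-3}.
$$
Since none of the constituents is $\nu_0$, Lemma~\ref{lemma: FrobSchur} yields $u_{\varphi'} = 0$.

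Next I would express $r_{\varphi'}$ in terms of $r_{A'}$ and $r_{\Sym^3(A')}$. The restrictions $\mathrm{std} = \nu_1 + \nu_{-1}$ and $\Sym^3(\mathrm{std}) = \nu_3 + \nu_1 + \nu_{-1} + \nu_{-3}$, together with the equality $r_{\nu_m} = r_{\nu_{-m}}$ (which follows from $\nu_{-m} = \bar\nu_m$ and the fact that $L(\bar\nu_m,s)$ and $L(\nu_m,s)$ are complex conjugates on the real axis, hence share real zeros), give $r_{A'} = 2 r_{\nu_1}$ and $r_{\Sym^3(A')} = 2 r_{\nu_3} + 2 r_{\nu_1}$. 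Substituting into
$$
r_{\varphi'} = r_{\nu_3} + 3 r_{\nu_1} + 3 r_{\nu_{-1}} + r_{\nu_{-3}} = 2 r_{\nu_3} + 6 r_{\nu_1}
$$
produces $r_{\varphi'} = r_{\Sym^3(A')} + 2 r_{A'}$, and therefore $I_1(\varphi') = (2 r_{\varphi'} + u_{\varphi'})^2 = (4 r_{A'} + 2 r_{\Sym^3(A')})^2$, as claimed.

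The main obstacle is less the algebra than the preliminary step: correctly recognizing $G' = \U(1)$ after base change to the CM field, and justifying the equality of orders of vanishing $r_{\nu_m} = r_{\nu_{-m}}$. Once these are in hand, what remains is pure character-theoretic bookkeeping combined with the explicit Frobenius--Schur indices of Lemma~\ref{lemma: FrobSchur}.
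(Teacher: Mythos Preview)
Your argument is correct and follows essentially the same route as the paper: identify $G'=\U(1)$, decompose $\varphi'=\nu_3+3\nu_1+3\nu_{-1}+\nu_{-3}$, invoke $u_{\nu_m}=0$ for $m\neq 0$, and match the resulting sum of $r_{\nu_m}$'s with $r_{A'}$ and $r_{\Sym^3(A')}$. The only cosmetic difference is that you first reduce via the symmetry $r_{\nu_m}=r_{\nu_{-m}}$, whereas the paper simply uses the additive identities $r_{A'}=r_{\nu_1}+r_{\nu_{-1}}$ and $r_{\Sym^3(A')}=r_{\nu_3}+r_{\nu_1}+r_{\nu_{-1}}+r_{\nu_{-3}}$ directly, so that the extra symmetry step (though correct) is not actually needed.
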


\begin{proof}
Note that $\varphi'=\nu_{-3}+3\nu_{-1}+3\nu_1+\nu_3$, where $\nu_m$ is as in \eqref{equation: charU1}. Using Lemma~\ref{lemma: FrobSchur}, one finds that 
$$
I_1(\varphi')=\big((2\cdot r_{\nu_{-3}}-0)+3(2\cdot r_{\nu_{-1}}-0)+ 3(2\cdot r_{\nu_1}-0)+(2\cdot r_{\nu_3}-0)\big)^2 =(4r_{A'}+ 2 r_{\Sym^3(A')})^2\,.
$$
The computation of $I_1(\varphi)$ is exactly as in Proposition~\ref{proposition: a3Q}.
\end{proof}

One can take now $A$ and $A'$ with similar conductors and such that $r_{\Sym^3(A)}=r_{\Sym^3(A')}$ and $r_A=r_{A'}$. By Remark~\ref{remark: smalljac}, we are forced to take $A$ and $A'$ to be the base change of elliptic curves $A_0$ and $A_0'$ defined over $\Q$. Then the following lemma implies that these two analytic ranks must be even and satisfy $r_{\Sym^3(A')}\geq r_{A'}$.

\begin{lemma} Let $E$ be an elliptic curve over $\Q$ and $\tilde E$ the quadratic twist of $E$ by the quadratic extension $K/\Q$. Then
$$
r_{E_K}=r_E+r_{\tilde E}\,,\qquad r_{\Sym^3(E_K)}=r_{\Sym^3(E)}+r_{\Sym^3(\tilde E)}\,.
$$
Moreover, if $K$ is imaginary and $E$ has CM by $K$, then 
$$
r_{E_K}=2r_E\,,\qquad r_{\Sym^3(E_K)}=2r_{\Sym^3(E)},\qquad r_{\Sym^3(E_K)}\geq r_{E_K}\,.
$$ 
\end{lemma}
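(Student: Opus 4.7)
The plan is to work at the level of Galois representations and exploit the Artin formalism for base change through a quadratic extension, together with the special structure of CM representations.

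For the first pair of equalities, let $\chi_K\colon G_\Q \to \{\pm 1\}$ denote the quadratic character cutting out $K/\Q$ and let $V_E$ denote the rational Tate module of $E$. Since $V_{E_K} \simeq V_E|_{G_K}$, Artin formalism gives
\[ L(E_K, s) \;=\; L(V_E, s)\cdot L(V_E \otimes \chi_K, s) \;=\; L(E, s)\cdot L(\tilde E, s), \]
where the second equality uses that the quadratic twist of $E$ corresponds to twisting its Galois representation by $\chi_K$. Taking orders of vanishing at the central point yields $r_{E_K} = r_E + r_{\tilde E}$. The same argument applied to $\Sym^3 V_E$, combined with the identity $\Sym^3(V_E \otimes \chi_K) = \Sym^3 V_E \otimes \chi_K^3 = \Sym^3 V_E \otimes \chi_K$, gives $r_{\Sym^3 E_K} = r_{\Sym^3 E} + r_{\Sym^3 \tilde E}$.

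For the CM statements, the crux is that $V_E \simeq \Ind_{G_K}^{G_\Q}\psi$ for an algebraic Hecke character $\psi$ of $K$. Since $\chi_K|_{G_K}$ is trivial,
\[ V_E \otimes \chi_K \;=\; \Ind_{G_K}^{G_\Q}\!\bigl(\psi \cdot \chi_K|_{G_K}\bigr) \;=\; \Ind_{G_K}^{G_\Q} \psi \;=\; V_E, \]
whence $L(\tilde E, s) = L(E, s)$, $r_{\tilde E} = r_E$, and $r_{E_K} = 2r_E$. Applying $\Sym^3$ to this isomorphism yields $\Sym^3 \tilde E \simeq \Sym^3 E$ as Galois representations, so likewise $r_{\Sym^3 E_K} = 2 r_{\Sym^3 E}$.

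For the final inequality, the strategy is to exhibit a twist of $V_E$ as a direct summand of $\Sym^3 V_E$. Decomposing $V_E|_{G_K} = \psi \oplus \psi^\sigma$ with $\sigma$ the nontrivial element of $\Gal(K/\Q)$,
\[ \Sym^3 V_E|_{G_K} \;=\; \psi^3 \,\oplus\, \psi^2\psi^\sigma \,\oplus\, \psi(\psi^\sigma)^2 \,\oplus\, (\psi^\sigma)^3. \]
The identity $\psi\cdot\psi^\sigma = \chi_{\mathrm{cyc}}|_{G_K}$ (read off from $\det V_E = \chi_{\mathrm{cyc}}$) turns the two middle summands into $V_E|_{G_K}\otimes \chi_{\mathrm{cyc}}|_{G_K}$; a check that the $\sigma$-action respects this pairing then gives the $G_\Q$-equivariant decomposition
\[ \Sym^3 V_E \;\cong\; \Ind_{G_K}^{G_\Q}(\psi^3) \,\oplus\, \bigl(V_E \otimes \chi_{\mathrm{cyc}}\bigr). \]
Since $L(V_E\otimes\chi_{\mathrm{cyc}},s)=L(E,s-1)$ and the central point of $L(\Sym^3 E,s)$ is shifted by one from that of $L(E,s)$, the second summand vanishes to order exactly $r_E$, while the first contributes nonnegatively. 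Therefore $r_{\Sym^3 E} \geq r_E$, and doubling yields $r_{\Sym^3 E_K} \geq r_{E_K}$. The main technical step is this decomposition of $\Sym^3 V_E$ in the CM case; everything else reduces to Artin formalism and careful tracking of central points.
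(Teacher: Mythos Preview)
Your proof is correct and follows essentially the same strategy as the paper: Artin formalism for the quadratic base change, the isomorphism $V_E\simeq V_E\otimes\chi_K$ in the CM case (you obtain it via the projection formula for induction, the paper via vanishing of traces outside $G_K$), and a Hecke-character decomposition of $\Sym^3$ for the inequality. The only cosmetic difference is that the paper decomposes $\Sym^m V_\ell(E_K)\simeq\bigoplus_{j=0}^m\Phi^{m-2j}(j)$ directly over $K$ and reads off $r_{\Sym^3(E_K)}\geq r_{E_K}$ there, whereas you descend the middle summands to the $G_\Q$-subrepresentation $V_E\otimes\chi_{\mathrm{cyc}}$, deduce $r_{\Sym^3 E}\geq r_E$ over $\Q$, and then double; these are two packagings of the same computation.
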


\begin{proof}
Let $\varepsilon$ denote the quadratic character of $K/\Q$. We have isomorphisms of $\Q_\ell[G_\Q]$-modules
$$
\Ind_{G_\Q}^{G_K}\Sym^3(V_\ell(E_K))\simeq \Sym^3(V_\ell(E))\oplus \Sym^3(V_\ell( E))\otimes \varepsilon \simeq \Sym^3(V_\ell(E))\oplus   \Sym^3(V_\ell(\tilde E))\,,
$$
and the first part of the lemma follows from the Artin Formalism of $L$-functions. For the second part, one first needs to note that, under the additional hypothesis, one has 
$$
V_\ell(E)\simeq V_\ell(E)\otimes \varepsilon \simeq V_\ell(\tilde E)\,.
$$
This follows from the fact that $\Tr(V_\ell(E))(\sigma)=0$, whenever $\varepsilon (\sigma)\not=1$ for $\sigma \in G_{\Q}$. One then concludes by noting that there is a Hecke character $\Phi$ such that 
$$\Sym^m V_\ell(E_K)\simeq \bigoplus_{j=0}^m\Phi^{m-2j}(j) \,,$$ 
where $\Phi^{m-2j}(j)$ denotes the $j$-th Tate twist of $\Phi^{m-2j}$.
\end{proof}
If we take for example $r_{\Sym^3(A)}=r_{\Sym^3(A')}=4$ and $r_A=r_{A'}= 2$, we obtain 
$$
I_1(\varphi)=144\,,\qquad I_2(\varphi')=256\,.
$$
In order to find examples of this type we have used \cite[Table 7]{MW06} for the non-CM curves, and Watkins's \texttt{Sympow} package to find the CM curves with appropriate order of vanishing of the third symmetric power. 

In Figure \ref{fig: 3 bis} we plot  $\delta(a_1^3,x)^2$ for two elliptic curves $A/K$ and $A'/K$, where $K=\Q(\sqrt{-3})$. The curve $A$ is the base change to $K$ of the curve with LMFDB label \href{http://www.lmfdb.org/EllipticCurve/Q/97448/a/2}{97448.a2}; its conductor is the ideal of $K$ generated by $97448$, it does not have CM and its ranks are $r_{A}=2$ and $r_{\Sym^3(A)}=4$. The curve $A'$ is the base change to $K$ of the CM curve with LMFDB label \href{http://www.lmfdb.org/EllipticCurve/Q/248004/g/1}{248004.g1}; its conductor is the ideal of $K$ generated by $82668$, it has CM by $K$, and its ranks are $r_{A'}=2$ and $r_{\Sym^3(A')}=4$. We observe a better convergence for the non-CM curve.
\setcounter{figure}{2}
\renewcommand{\thefigure}{\arabic{figure}'}
\begin{figure}
	\centering
	\includegraphics[width=4.5in]{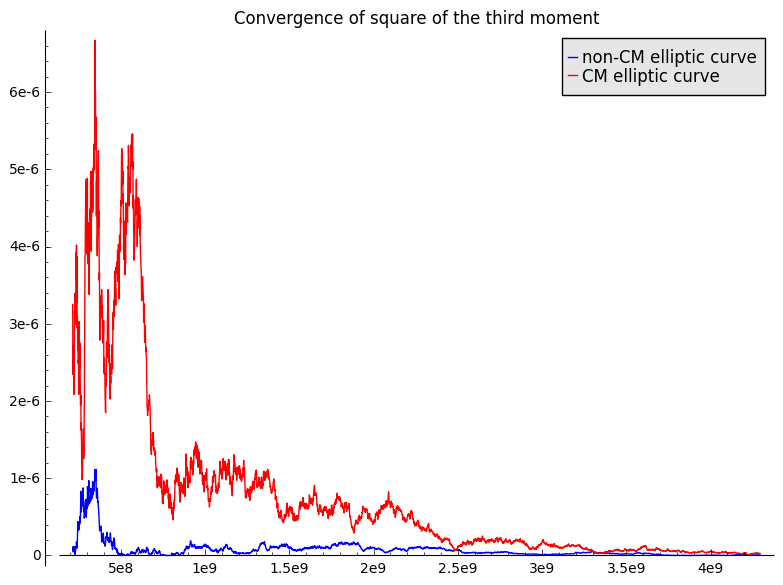}
	\caption
	{Plot of $\delta(a_1^3,x)^2$ for two elliptic curves over $\Q(\sqrt{-3})$ of rank $2$ and rank of the symmetric cube $4$, without CM and the other with CM by $K$. \label{fig: 3 bis}}
\end{figure}
\setcounter{figure}{3}
\renewcommand{\thefigure}{\arabic{figure}}

\begin{remark}
In order to plot $\delta(a_1^3,x)^2$ one needs to recover the polynomials $L_\fp(A,T)$ from the polynomials $L_p(A_0,T)$ computed by Smalljac. This is straightforward. Let $\fp$ be an unramified prime of $K$ lying over $p$. If~$\fp$ is split over~$\Q$, then Smalljac directly returns $L_\fp(A,T)=L_p(A_0,T)$. If~$\fp$ is inert over~$\Q$, then 
$$
L_\fp(A,T)=(1-\alpha_p^2)(1-\bar \alpha_p^2)=1-(a_p^2-2)T+T^2\,,
$$
where
$$
L_p(A_0,T)=(1-\alpha_p)(1-\bar \alpha_p)=1-a_pT+T^2\,.
$$
\end{remark}

\begin{remark}
For this example we could have taken $\varphi_0=a_1$. Similar computations show that then $I_1(\varphi)=4r_A^2$ and $I_1(\varphi')=(2r_A-1)^2$.
\end{remark}

\subsubsection{Example 4: The rank matters ($g=2$).} 
Let $A$, $A'$, $A''$, and $A'''$ be abelian surfaces of similar conductor, with Sato--Tate group $\USp(4)$, and respective ranks $0$, $1$, $2$, and $3$. Letting $\varphi_0=a_1$ and $\varphi$, $\varphi''$,... have the obvious meaning, we have that
$$
I_1(\varphi)=1\,,\qquad I_1(\varphi')=1\,,\qquad I_1(\varphi'')=9\,,\qquad I_1(\varphi''')=25\,.
$$  
\begin{figure}
	\centering
	\includegraphics[width=4.5in]{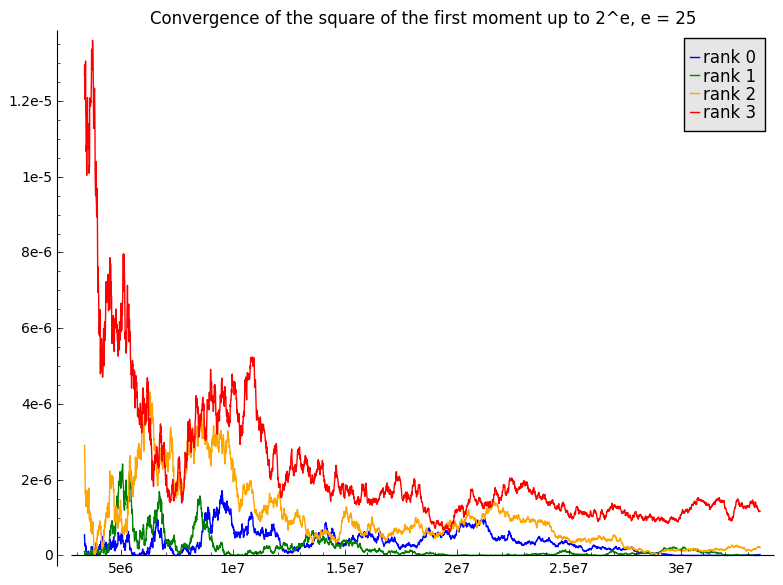}
	\caption
	{Plot of $\delta(a_1,x)^2$ for the (Jacobians of the) genus two curves with LMFDB labels \href{http://www.lmfdb.org/Genus2Curve/Q/62127/a/62127/1}{62127.a.62127.1} (of rank 0), \href{http://www.lmfdb.org/Genus2Curve/Q/61929/a/433503/1}{61929.a.433503.1} (rank 1), \href{http://www.lmfdb.org/Genus2Curve/Q/62090/a/62090/1}{62090.a.62090.1} (rank 2), and \href{http://www.lmfdb.org/Genus2Curve/Q/62411/b/62411/1}{62411.b.62411.1} (rank 3). They all have Sato--Tate group $\USp(4)$, and the number in the label indicates the conductor of the curve.\label{fig: 4}}
\end{figure}
In Figure \ref{fig: 4} we plot the function $\delta(a_1,x)^2$ for four abelian surfaces of similar conductors and ranks $0$, $1$, $2$, and $3$.

\subsubsection{Example 5: The Sato--Tate group matters ($g=2$).} Let $A$ and $A'$ be abelian surfaces defined over $\Q$ of similar conductor, with analytic rank $r_A=r_{A'}=2$, but with Sato--Tate groups $G=\USp(4)$ and $G'=\USp(2)\times \USp(2)$ (the group denoted by $G_{3,3}$ in \cite{FKRS12}). More specifically, suppose that $A'$ decomposes over $\Q$ as the product of two nonisogenous elliptic curves of rank $1$. Taking $\varphi_0=a_1$, we obtain
$$
I_1(\varphi)=(2\cdot 2-1)^2=9\,,\qquad I_1(\varphi')=(2\cdot1-1+2\cdot 1-1)^2=4\,.
$$  
An example of this phenomenon is shown in Figure \ref{fig: 5}.
\begin{figure}
	\centering
	\includegraphics[width=4.5in]{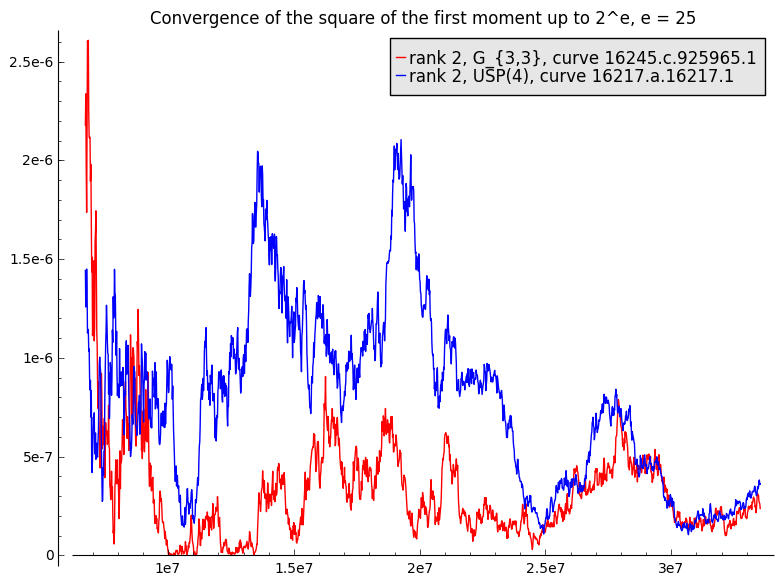}
	\caption
	{Plot of $\delta(a_1,x)^2$ for the genus two curves with LMFDB labels \href{http://www.lmfdb.org/Genus2Curve/Q/16217/a/16217/1}{16217.a.16217.1} (of rank $2$ and Sato--Tate group $\USp(4)$) and \href{http://www.lmfdb.org/Genus2Curve/Q/16245/c/925965/1}{16245.c.925965.1} (of rank $2$ and Sato--Tate group $G_{3,3}$).\label{fig: 5}}
\end{figure}
\subsubsection{Example 6: Non-optimality of the irreducible characters.}\label{example: charnonoptimal} The goal of this example is to show that, although in \S\ref{section: comments} we demonstrated that the irreducible characters of $\USp(2g)$ exhibit an extremely good convergence in the \emph{generic cases} (and, in fact, almost optimal), one can find particular examples for which their convergence is beaten by some other family of characters, which are still a basis of the central functions. For example, let $A$ be an elliptic curve without CM such that $r_A=2$ and $r_{\Sym^3(A)}=0$. Let $\chi_n$ be as in \eqref{equation: charUSp2}, and for $n\geq 0$ set 
$$
\chi_n':=
\begin{cases}
\chi_1+\chi_3 & \text{if $n=1$,}\\
\chi_n & \text{otherwise.}
\end{cases}
$$ 
Clearly, $\{\chi_n'\}_{n\geq 0}$ is also a basis of the central functions on $G$, and $I_1(\chi_n)=I_1(\chi_n')$ for every $n\not=1$. However,
$$
I_1(\chi_1)=(2\cdot 2-1)^2 > (2\cdot 2-1+2\cdot 0-1)^2=I_1(\chi_1')\,.
$$
In Figure \ref{fig: chi} we plot $\delta(\chi_1,x)^2$ and $\delta(\chi_1+\chi_3,x)^2$ for the elliptic curve $E$ with LMFDB label \href{http://www.lmfdb.org/EllipticCurve/Q/389/a/1}{389.a1}. It has $r_E=2$ and $r_{\Sym^3E}=0$. Even though $\chi_1+\chi_3$ is not always below $\chi_1$, it does seem to close a smaller area. 
\begin{figure}
	\centering
	\includegraphics[width=4.5in]{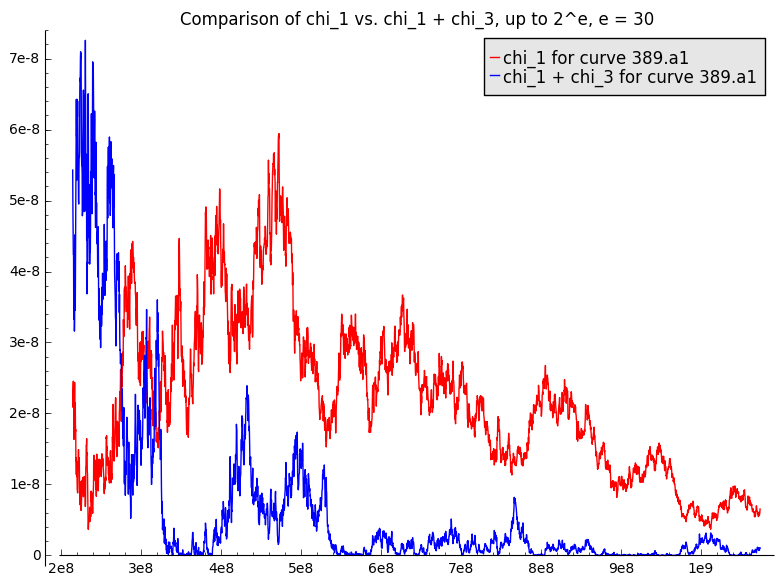}
	\caption
	{Plot of $\delta(\chi_1,x)^2$ and $\delta(\chi_1+\chi_3,x)^2$ for the elliptic curve $E$ with label \href{http://www.lmfdb.org/EllipticCurve/Q/389/a/1}{389.a1}\label{fig: chi}}
\end{figure}

\subsection{Even weight}\label{section: even weight}

Let $\chi$ be an irreducible character of $G$ and let $w_\chi$ denote its weight. If $w_\chi$ is odd, then $(w_\chi+1)/2$ is an integer, and the Bloch--Kato conjecture predicts the order of the zero at $(w_\chi+1)/2$ of the $L$-function attached to $\chi\circ \varrho_A$, which is precisely $r_\chi$. 

If $w_\chi$ is even, then $(w_\chi+1)/2$ is no longer an integer and the Bloch--Kato conjecture makes no prediction for the order of vanishing of the $L$-function attached to $\chi\circ \varrho_A$ at this point. The general philosophy is that $r_\chi$ should be $0$ unless there is a specific reason for the contrary to happen, and one thus expects $r_\chi=0$ for $\chi$ of even weight.

If $\chi_{m,n}$ is as in \eqref{equation: charUSp4}, then  \eqref{equation: subprod} implies that $\chi_{m,n}$ has even weight if and only if $m+n$ is even. One thus expects $r_{\chi_{m,n}}=0$ whenever $m+n$ is even. Using \cite[\S9, Chap. VII]{Weyl} again, one easily finds that
$$
a_2-1=\chi_{1,1}\,,\qquad a_1^2-1=\chi_{1,1}+\chi_{2,0}\,,\qquad s_2+1=-\chi_{1,1}+\chi_{2,0}\,.
$$
Let now $A$ be an abelian surface with Sato--Tate group $\USp(4)$. By Lemma~\ref{lemma: FrobSchur}, we have
$$
I_1(a_2-1)=(2\cdot 0+1)^2=1\,,\quad I_1(a_1^2-1)=(2\cdot 0+1+2\cdot 0+1)^2=4\quad I_1(s_2+1)=(-(2\cdot 0+1)+(2\cdot 0+1))^2=0\,.   
$$ 
\begin{figure}
	\centering
	\includegraphics[width=4.5in]{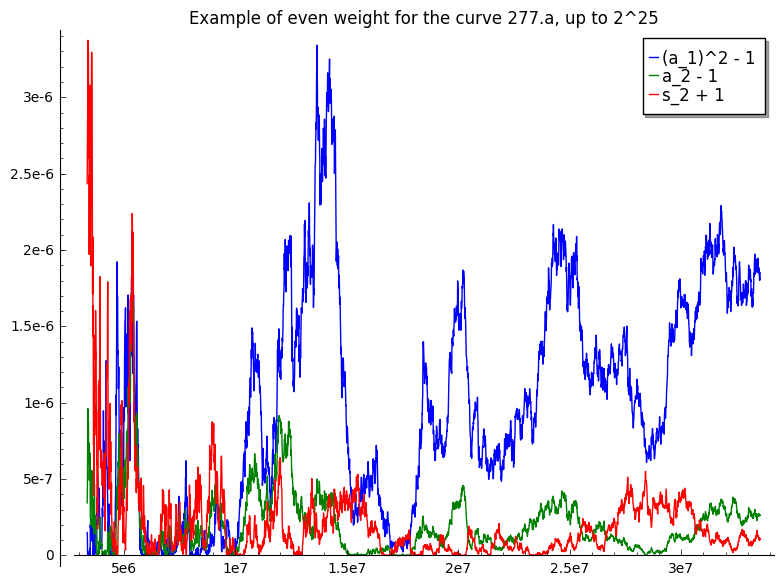}
	\caption
	{Plot of $\delta(a_1^2-1,x)^2$, $\delta(a_2-1,x)^2$, and $\delta(s_2 + 1,x)^2$ for the genus two curve \href{http://www.lmfdb.org/Genus2Curve/Q/277/a/277/1}{277.a.277.1}\label{fig: even}}
\end{figure}
In Figure \ref{fig: even} we plot $\delta(a_1^2-1,x)^2$, $\delta(a_2-1,x)^2$, and $\delta(s_2 + 1,x)^2$. Observe that, even though $I_1(a_2-1)>I_1(s_2+1)$, it is not clear from the figure whether the convergence for $s_2 + 1$ is better than the convergence for $a_2-1$. This seems to be explained by the fact that the difference $I_1(a_2-1)-I_1(s_2+1)$ is small together with the fact that $I_2(a_2-1)\leq I_2(s_2+1)$. Nonetheless, in the range of primes that we have considered, one clearly sees a slower rate of convergence for $a_1^2-1$ than for $a_2-1$ and $s_2+1$.

\section{Chebyshev bias for abelian varieties}\label{section: xevixef}
As we noted in \S\ref{section: intro}, the circle of ideas we used to study the convergence rate towards the Sato--Tate measure was introduced by Sarnak in his letter to Mazur \cite{Sar07}, in order to explain the bias that the Frobenius traces of elliptic curves have towards being positive or negative, and how the rank determines the sign of the bias. 

Not surprisingly, then, one can also use this approach to study this phenomenon for general abelian varieties. Indeed, resuming the notations of $\S\ref{section: intro}$, for a selfdual virtual character $\varphi=\sum_{\chi\not= 1} c_\chi \chi$ of $G$, recall the function
\begin{align}
  \psi(\varphi,x)=\frac{\log( x)}{\sqrt x}\sum_{|\fp|\leq x}\varphi(y_\fp).
\end{align}
Recall that by  \eqref{eq:310} we have
\begin{align*}
\lim_{X\ra \infty}  \frac{1}{\log (X)}\int_2^X {\psi(\varphi,x)}\frac{x}{dx}=  -\sum_\chi  c_\chi(2r_\chi + u_\chi)\,.
\end{align*}
In the case of elliptic curves, the bias of $a_1$ towards being positive or negative only depends on the rank. In higher dimensions, the above formula shows that it also depends on the Sato--Tate group. For example, suppose that $A$ is an abelian surface of rank  $r_A=1$ and Sato--Tate group $\USp(4)$. Then the right hand side of \eqref{eq:310} equals $-(2r_A-1)= -1$, and $a_1$ has a bias towards being negative. Suppose now that $A'$ is an abelian surface of rank $r_{A'}=1$ and Sato--Tate group $G_{3,3}$. Suppose that $A'$ is isogenous to the product of two elliptic curves, say $E$ and $E'$, of ranks $r_E=1$ and $r_{E'}=0$. Then the right hand side of \eqref{eq:310} equals $-(2r_E -1)-(2r_{E'}-1)=0$, and the $a_1$'s do not have bias towards being positive nor negative. In Figure \ref{fig: bias} we plot the function $\delta(a_1,x)$ for two abelian surfaces~$A$ and~$A'$ having these ranks and Sato--Tate groups. The prediction on the bias of the sign of $a_1$ can be clearly observed.
\begin{figure}
	\centering
	\includegraphics[width=4.5in]{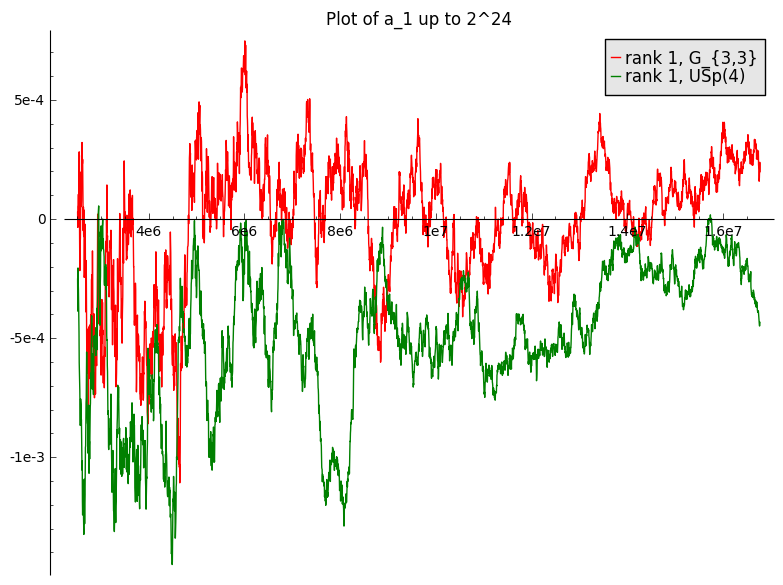}
	\caption{Plot of $\delta(a_1,x)$ for the genus two curve \href{http://www.lmfdb.org/Genus2Curve/Q/2165/a/2165/1}{2165.a.2165.1} (rank $1$ and Sato--Tate group $\USp(4)$) and for \href{http://www.lmfdb.org/Genus2Curve/Q/2156/b/34496/1}{2156.b.34496.1} (rank $1$ and Sato--Tate group $G_{3,3}$).\label{fig: bias}}
\end{figure}

\clearpage
\bibliographystyle{amsalpha}
\bibliography{refs}
\end{document}